\documentclass[12pt,a4paper]{amsart}
\usepackage[english]{babel}
\usepackage[left=2cm,right=2cm,top=2cm,bottom=2cm]{geometry}
\usepackage[numbers,sort&compress]{natbib} 
\usepackage{enumitem}
\usepackage{hyperref}
\usepackage{mathtools}
\usepackage{esint}
\usepackage{cases}
\usepackage{mathrsfs}
\usepackage{microtype}

\theoremstyle{definition}
\newtheorem{defi}{Definition}[section]

\newtheorem*{notation}{Notation}
\theoremstyle{plain}
\newtheorem{teo}[defi]{Theorem}
\newtheorem{lem}[defi]{Lemma}
\newtheorem{pro}[defi]{Proposition}

\newcommand{\N}{\mathbb{N}}
\newcommand{\vp}{\varphi}
\newcommand{\ve}{\varepsilon}
\newcommand{\R}{\mathbb{R}}

\newcommand{\se}{\subseteq}
\newcommand{\ceq}{\coloneqq}

\newcommand{\leb}{\mathcal{L}}
\newcommand{\bv}{\operatorname{BV}}
\newcommand{\J}{\mathcal{J}}
\newcommand{\hau}{\mathcal{H}}
\newcommand{\nl}{\left\|}
\newcommand{\nr}{\right\|}
\newcommand{\loc}{{\operatorname{loc}}}

\newcommand{\wx}{\widetilde{X}}
\newcommand{\wdd}{\widetilde{d}}
\newcommand{\wb}{\widetilde{B}}
\newcommand{\A}{\mathscr{A}}

\author[M. Di Marco]{Marco Di Marco}
\address{ETH Z\"urich, Department of Mathematics, R\"amistrasse 101, 8092 Z\"urich, Switzerland}
\email{mdimarco@ethz.ch}

\title[Whitney ext. th. and intr. rectifiability of jump sets in CC spaces]{Whitney extension theorem and intrinsic rectifiability of jump sets in Carnot-Carathéodory spaces}

\subjclass[2020]{26B30, 53C17, 49Q15, 28A75, 26B05}

\keywords{Intrinsic rectifiability, jump sets, locally integrable functions, Carnot-Carathéodory spaces, Whitney extension theorem, intrinsic $C^1$ functions}

\thanks{The author warmly thanks Sebastiano Don and Davide Vittone for many precious discussions.  The author is supported by SNSF Starting Grant TMSGI2\textunderscore226018 and is a member of GNAMPA of INdAM}

\begin{document}

\begin{abstract}
We prove that the intrinsic jump set of every locally integrable function on an equiregular Carnot-Carathéodory space is intrinsically countably rectifiable. A key new ingredient is a local scalar Whitney extension theorem for intrinsic $C^1$ functions.
\end{abstract}

\maketitle

\section{Introduction}

The geometry of jump discontinuities is a central topic in the study of fine properties of functions of bounded variation (and, more generally, for locally integrable functions). In the Euclidean setting, the jump set of a function of bounded variation on $\R^n$ is countably $(n-1)$-rectifiable \cite{afp}. Rectifiability of the jump set, however, also holds without a bounded variation assumption. Del Nin \cite{delnin} gave a direct proof for locally integrable functions, as a consequence of a more general rectifiability theorem for points admitting a nonconstant blow-up.

One of the purposes of this paper is to prove an analogous result in equiregular Carnot-Carathéodory spaces (Definition \ref{def_cc}). Here the distance is generated by a family $X=(X_1,\dots,X_m)$ of smooth, linearly independent vector fields satisfying H\"ormander's condition. The intrinsic counterpart of a regular hypersurface is a $C^1_X$-hypersurface (Definition \ref{def_ipersup}): locally, it is the zero set of a $C^1_X$-function (Definition \ref{def_c1x}) (i.e., a  continuous function with continuous horizontal gradient $Xf=(X_1f,\dots,X_mf)$) such that its horizontal gradient is non-vanishing. Accordingly, countable $X$-rectifiability (Definition \ref{def_ipersup}) means coverage by countably many such hypersurfaces, up to a set negligible for the $(Q-1)$-dimensional Hausdorff measure associated with the Carnot-Carathéodory distance, where $Q$ is the homogeneous dimension.

The notion of an intrinsic jump point is defined through the two sides of a  $C^1_X$ hypersurface. More precisely, given $u\in L^1_\loc$, a point $p$ is an intrinsic jump point of $u$ if there exist two distinct values $u^+(p),u^-(p)\in\R$ and a function $f$ of class $C^1_X$ near $p$, with $f(p)=0$ and $X f(p)\ne0$, such that
\[
\lim_{r\to 0}
\frac{1}{\leb^n(B(p,r)\cap\{\pm f>0\})}
\int_{B(p,r)\cap\{\pm f>0\}}
|u(q)-u^\pm(p)|\,d\leb^n(q)=0.
\]
Here $B(p,r)$ denotes a Carnot-Carathéodory ball\footnote{As customary in the literature, we always assume that metric balls are bounded with respect to the Euclidean topology.}, and the condition is required for both signs. Thus, $u$ approaches distinct values in the averaged $L^1$ sense on the two sides of $\{f=0\}$. The associated horizontal normal is $\nu_u(p)=Xf(p)/|Xf(p)|$, and the condition is independent of the choice of $f$ with this oriented horizontal normal at $p$. We denote by $\J_u$ the set of all intrinsic jump points of $u$, called intrinsic jump set (Definition~\ref{def_approxjump}).

In analogy with the Euclidean case, intrinsic rectifiability plays a fundamental role in the theory of functions of intrinsic bounded variation ($\bv_X$ functions). Over the last 30 years, $\bv_X$ functions have been extensively studied: a non-exhaustive list includes \cite{sbvx,dm26,dv,dv19,garofalonhieu,gn96,CapGarAhlfors,DanGarNhi,DonMagnani,Selby,agm15,am03,as10,bmp12,cm20,dmv19,magnani02,marchi14,vittone2012,sy03,bu95,dgn98,fgw94,fssc01,fssc03}. However, the question of the intrinsic rectifiability of the intrinsic jump set remained open even for general Carnot groups. In equiregular Carnot-Carathéodory spaces, Don and Vittone \cite{dv}, under the additional assumption of property~$\mathcal R$ (which requires the countable intrinsic rectifiability of the essential boundaries of sets with locally finite intrinsic perimeter)  proved that the approximate discontinuity set of a $\bv_X$ function is countably $X$-rectifiable and consists of jump points up to a $(Q-1)$-dimensional Hausdorff negligible set. Our result, Theorem \ref{teo_introrect} below, establishes the intrinsic rectifiability of the jump set itself under the sole assumption of local integrability, without imposing property~$\mathcal R$.

\begin{teo}\label{teo_introrect}
Let $\Omega$ be an open set of an equiregular Carnot-Carathéodory space $(\R^n,X)$ of homogeneous dimension $Q$ and $u \in L^1_\loc(\Omega)$. Then the intrinsic jump set $\J_u$ is countably $X$-rectifiable, that is, there exists a countable family $(S_h)_{h\in\N}$ of $C^1_X$-hypersurfaces such that
\[
\hau^{Q-1}\left(\J_u\setminus\bigcup_{h\in\N}S_h\right)=0,
\]
where $\hau^{Q-1}$ denotes the $(Q-1)$-dimensional Hausdorff measure\footnote{With respect to the Carnot-Carathéodory distance.}.
\end{teo}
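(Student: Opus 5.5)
The plan follows Del Nin's Euclidean strategy, with the intrinsic Whitney extension theorem for $C^1_X$ functions (announced in the abstract) replacing the classical one. We use it to turn a countable family of ``approximate tangent'' data along the jump set into genuine $C^1_X$ hypersurfaces.

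\emph{Step 1: reduce to uniform pieces.} For each $p\in\J_u$ fix $u^\pm(p)$, the normal $\nu_u(p)$ and a defining function $f_p$. Since only $\nu_u(p)$ matters, we may replace $f_p$ by a normalized local model: in exponential or privileged coordinates at $p$, take a function whose horizontal gradient at $p$ equals $\nu_u(p)$. Then cover $\J_u$ by countably many sets $E_{k}$ on which the following hold:
\begin{itemize}
\item $u^+$, $u^-$ and $\nu_u$ are uniformly close to fixed values $a_k\neq b_k$ and $\nu_k$, with $|a_k-b_k|\ge 1/k$;
\item the averaged convergence in the definition of jump point holds uniformly for radii $r<1/k$, with a fixed rate $\ve$.
\end{itemize}
This uses separability, together with the fact that $\J_u$, $u^\pm$ and $\nu_u$ can be handled measurably; if that is delicate, we argue with Lusin/Egorov-type selections, or simply with countable covers by rational parameters.

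\emph{Step 2: a cone/Lipschitz-type condition.} On each $E_k$ (intersected with a small ball) we show the following. If $p,q\in E_k$ and $d(p,q)=r$ is small, then $q$ lies close to the ``plane'' through $p$ determined by $\nu_k$. Quantitatively, $|\langle \nu_u(p), \text{horizontal displacement}\rangle|$, measured through a defining function, is $o(r)$ uniformly. The reason is that at scale comparable to $r$ the ball around $q$ sees the value $a_k$ on one side and $b_k$ on the other. If $q$ sat deep on one side of $p$'s interface, a positive proportion of $B(q,r/C)$ would lie in $\{\pm f_p>0\}$ while simultaneously meeting $q$'s opposite side with value far from $u^\pm(p)$. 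Doubling and uniform measure lower bounds for the half-balls $B\cap\{\pm f>0\}$ in equiregular spaces then contradict the uniform averaged convergence. The same comparison forces $\nu_u(q)$ to be close to $\nu_u(p)$.

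\emph{Step 3: build data and apply Whitney.} On compact pieces $K\subset E_k$ we prescribe $F=0$ on $K$ and $XF=\nu_u$ on $K$. Step 2 supplies exactly the Whitney-type compatibility condition
\[
|F(q)-F(p)-\langle XF(p),\pi_p(q)\rangle|=o(d(p,q))
\]
uniformly on $K$, where $\pi_p$ denotes the first-layer intrinsic displacement. Continuity of $\nu_u$ on $K$ comes from a further Lusin reduction. The local scalar Whitney extension theorem then gives $F\in C^1_X$ near $K$ with $XF\neq0$, so $K$ lies in the $C^1_X$-hypersurface $\{F=0\}$. Countably many $K$ exhaust each $E_k$ up to an $\hau^{Q-1}$-null set. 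This needs $\hau^{Q-1}\llcorner E_k$ to be locally finite or $\sigma$-finite, which we get from Step 2 via a density/covering argument. Alternatively, we avoid measure issues by exhausting $E_k$ through Step 2 directly, taking closures, since the estimates pass to the closure.

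\emph{Main obstacle.} The main obstacle is Step 2, namely converting $L^1$-averaged one-sided convergence into the pointwise first-order compatibility condition in the intrinsic geometry. Half-spaces are not translation-invariant, and defining functions differ from point to point. We first try to work with uniform $C^1_X$ blow-ups: the rescaled half-balls converge to intrinsic half-spaces in the tangent Carnot group, uniformly for $p$ in compact sets of an equiregular space. We then compare half-spaces at $p$ and $q$ in the common tangent model.
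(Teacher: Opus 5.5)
Your architecture matches the paper's. You normalize the two sides at $p$ by the sign of $\langle\nu,\pi_p(\cdot)\rangle$. You derive a contradiction from a set of measure at least $c\rho^Q$, with $\rho=d(p,q)$, lying on the positive side for $p$ and the negative side for $q$ (Lemma~\ref{lem_Aset}). You conclude with Whitney's theorem for $f=0$, $a=\nu_u$. What you call the main obstacle the paper handles without tangent groups: smoothness of $(a,z)\mapsto\pi_a(z)$ gives $\pi_p(z)-\pi_q(z)+\pi_q(p)=O(t^2)$ (Lemmas~\ref{lem_pi} and~\ref{lem_tr}).

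The gap is in how Step~2 feeds Step~3. On your pieces $E_k$ the blow-up error is only below a \emph{fixed} $\ve$ for $r<1/k$. The comparison argument then gives only a cone condition $|\langle\nu_k,\pi_p(q)\rangle|\le\theta\,d(p,q)$ with a fixed aperture $\theta$, not $o(d(p,q))$. The reason is that the comparison set has measure of order $c_0(\theta)\rho^Q$ with $c_0(\theta)\to0$ as $\theta\to0$, and the contradiction needs $\ve$ to be a small multiple of $c_0(\theta)|a_k-b_k|$. To let $\theta\to0$ you need the errors to tend to zero uniformly in $p$ on the piece. The paper gets this from Egorov's theorem, hence only up to sets of small $\hau^{Q-1}$-measure, which presupposes $\hau^{Q-1}(E_k)<+\infty$. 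Your fallback of ``taking closures'' does not help. A closure inherits only the fixed-aperture estimate, and the continuity of $\nu_u$ needed in Whitney's theorem also comes from Lusin, so again from a finite measure.

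The proof therefore has to run in two stages, and the missing idea is the bridge between them: a fixed-aperture cone condition implies locally finite $\hau^{Q-1}$-measure. In a CC space this is not a routine density or covering argument. The paper does it in Lemma~\ref{lem_lip}:
\begin{itemize}
\item Choose $j_0$ with $a_0\ceq(\nu_0)_{j_0}\ge1/\sqrt m$, and set $\psi_p\ceq\langle\nu_0,\pi_p(\cdot)\rangle-\frac{a_0}{8}d(p,\cdot)$ and $f\ceq\sup_{p\in E}\psi_p$.
\item The cone condition with aperture $\frac{1}{16\sqrt m}$ gives $E\subset\{f=0\}$ locally.
\item The estimate $X_{j_0}\pi_p\approx e_{j_0}$ (Lemma~\ref{lem_pi}) together with $|X_jd(p,\cdot)|\le1$ gives $X_{j_0}f\ge\frac{5a_0}{8}$ a.e.
\item Hence $E$ lies in an $X$-Lipschitz hypersurface, which has locally finite $\hau^{Q-1}$-measure by \cite{vittone2012}.
\end{itemize}
Only after this does one apply Lusin and Egorov (Lemma~\ref{lem_condwhit}). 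This gives compact sets with continuous jump data and uniformly vanishing errors at dyadic scales. One then reruns the comparison with apertures $\theta_\ell=2^{-\ell}$ and error thresholds proportional to $c_\ell/\eta$. Continuity of $u^-$ keeps $u^+(p)-u^-(q)\ge\frac{1}{2\eta}$. This yields the uniform $o(d(p,q))$ condition that Whitney's theorem needs.
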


The proof is organized on a countable covering $\mathcal U$ of $(\R^n,X)$ by small open neighbourhoods, which we call \emph{good sets} (Definition~\ref{def_goodset}). Their construction builds on the uniform exponential-coordinate framework that can be found, for instance, in \cite{DonMagnani} (see also \cite{abb,b96,f14}). On each good set $U$, we fix a privileged frame $Y=(Y_1,\dots,Y_n)$ extending $X$ and work with the associated exponential coordinate maps $F_{p,Y}$, uniformly as the center $p$ varies in $U$. 

In these coordinates, we introduce the \emph{horizontal projections} (Definition \ref{def_proj})
\[
\pi_p(q)\ceq\bigl(F_{p,Y}^{-1}(q)\bigr)_H\in\R^m,
\qquad p,q\in U,
\]
where the subscript $H$ denotes the first $m$ coordinates. These maps describe the horizontal displacement of $q$ from $p$ and play the role of the Euclidean increment $q-p$ in horizontal first-order expansions. Working on a good set ensures that they are simultaneously defined for every pair of points under consideration.

In a Carnot group, $\pi_p(q)$ coincides with the horizontal component of $p^{-1}q$ in exponential coordinates. The group law therefore gives the exact identity $\pi_p(z)=\pi_p(q)+\pi_q(z)$. In the present setting, we obtain the corresponding  first-order relation
\[
\pi_p(z)-\pi_q(z)+\pi_q(p)=o(t),
\qquad
t\ceq d(p,q)+d(p,z)+d(q,z),
\]
uniformly as $t\to0$ within a fixed good set; see Lemma~\ref{lem_pi}. Together with the local bound $|\pi_p(q)|\leq C d(p,q)$, this relation allows us to compare first-order horizontal expansions at nearby base points. It also controls how the regions defined by the sign of $\langle\nu,\pi_p(\cdot)\rangle$, which describe the two sides of an approximate jump, change as the base point varies.

We now outline the rectifiability argument. Fix a good set $U$, a compact set $K\subset U\cap\Omega$, and $\eta\in\N$, and consider the jump points in $K$ satisfying $|u^+-u^-|\geq 1/\eta$. By grouping points with similar jump values and normals, and with uniform control of the approximation errors, we obtain a countable covering of this set by measurable pieces $\Gamma$ with the properties that  each $\Gamma$ is locally contained in an $X$-Lipschitz hypersurface (Definition \ref{def_liphyp}) and has finite $\hau^{Q-1}$-measure (Lemmata~\ref{lem_lip} and~\ref{lem_decomp}).

This finiteness allows us to apply Lusin's and Egorov's theorems. We select countably many compact subsets $E_h\subset\Gamma\cap\J_{u,\eta}$ covering this intersection up to a $\hau^{Q-1}$-negligible set. On each $E_h$, the jump data are continuous and the $L^1$ approximation errors tend to zero uniformly in the base point along dyadic scales. Repeating the comparison of jump profiles on these compact sets gives the stronger relation
\[
\frac{|\langle\nu_u(p),\pi_p(q)\rangle|}{d(p,q)}
\longrightarrow0
\]
uniformly for $p,q\in E_h$ as $d(p,q)\to0$ (Lemma~\ref{lem_condwhit}). The latter is precisely the condition needed to realize $\nu_u$ as the horizontal gradient of a function vanishing on $E_h$.

To carry out this last step, we prove a local scalar Whitney extension theorem, Theorem \ref{teo_introwhitney} below. Let us stress that since, to the best of our knowledge, our result is the first Whitney extension theorem for $C^1_X$ functions\footnote{Let us also mention that there are several results for the extension of horizontal curves in sub-Riemannian manifolds, see for instance \cite{ss18,js17}.} in the general setting of equiregular Carnot-Carathéodory spaces, we believe also that it is of independent interest. The direct antecedents of our scalar result are the extension theorems of Franchi, Serapioni and Serra Cassano in Heisenberg groups \cite{fssc01} and in the Carnot-group setting \cite{fssc03}. A further generalization, also in the setting of Carnot groups, was proved by Vodop'yanov and Pupyshev \cite{vp06,vp06bis}.

\begin{teo}\label{teo_introwhitney}
Let $U$ be a good set of an equiregular Carnot-Carathéodory space $(\R^n,X)$ and $E \subset U$ be a compact set. Let $f:E \to \R$ and $a:E \to \R^m$ be continuous. Assume that 
\[
\frac{|f(q)-f(p)-\langle a(p),\pi_p(q) \rangle|}{d(p,q)} \longrightarrow 0
\]
    uniformly for $p,q \in E$ as $d(p,q) \to 0$. Then there exist an open set $V$ such that $E \subset V \subset U$, and a function $F \in C^1_X(V)$ such that 
    \[
    F \equiv f \text{ on }E \qquad \text{and}\qquad XF=a \text{ on }E.
    \]
\end{teo}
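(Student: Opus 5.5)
The plan is to adapt the classical Whitney construction, following the scheme of Franchi, Serapioni and Serra Cassano in Carnot groups. The Euclidean affine increments $\langle a(p), q-p\rangle$ are replaced by the horizontal projections $\langle a(p),\pi_p(q)\rangle$. In place of the group law we use the almost-additivity relation of Lemma~\ref{lem_pi} together with the bound $|\pi_p(q)|\le C\,d(p,q)$.

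\textbf{Local polynomials and the partition of unity.} First, fix an open $V$ with $E\subset V\Subset U$, thin enough that every point of $V$ has a nearest point of $E$ and all projections $\pi_p(q)$ with $p,q\in \overline V$ are defined and uniformly controlled. For each $p\in E$ set
\[
P_p(q)\ceq f(p)+\langle a(p),\pi_p(q)\rangle .
\]
We need $P_p\in C^\infty(U)$ with $XP_p(p)=a(p)$ and $|XP_p(q)-a(p)|\le\omega(d(p,q))$, where $\omega$ is a modulus uniform in $p$. This should follow from the fact that, in privileged exponential coordinates centered at $p$, $X_j$ equals $\partial_j$ on the horizontal coordinates up to terms that vanish at $p$, with smooth dependence on $p$ in the good set.

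Next, take a Whitney-type covering of $V\setminus E$ by CC balls $B_i=B(x_i,r_i)$ with $r_i\simeq d(x_i,E)$ and bounded overlap; bounded overlap uses the doubling property. Take a subordinate partition of unity $\vp_i$ with $|X\vp_i|\le C/r_i$. This requires smooth bumps adapted to CC balls, obtained for instance by composing $C^\infty$ cutoffs with a smooth quasi-distance comparable to $d$. For each $i$ choose $p_i\in E$ with $d(x_i,p_i)=d(x_i,E)$, and define
\[
F\ceq f \text{ on } E,\qquad F\ceq\sum_i\vp_i P_{p_i}\text{ on } V\setminus E .
\]
Away from $E$ the function $F$ is smooth.

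\textbf{Key estimate.} For $p\in E$ and $x\in V\setminus E$ we must show $|F(x)-P_p(x)|=o(d(x,p))$ and $|XF(x)-a(p)|\to0$ uniformly as $x\to p$. Since $\sum_i X\vp_i=0$, we can write
\[
XF(x)-a(p)=\sum_i X\vp_i(x)\,\bigl(P_{p_i}(x)-P_p(x)\bigr)+\sum_i\vp_i(x)\bigl(XP_{p_i}(x)-a(p)\bigr).
\]
The second sum is small by the continuity of $a$ and the modulus $\omega$. For the first sum we must bound $|P_{p_i}(x)-P_p(x)|$ by $o(r_i)$. Expanding,
\[
P_{p_i}(x)-P_p(x)=f(p_i)-f(p)+\langle a(p_i),\pi_{p_i}(x)\rangle-\langle a(p),\pi_p(x)\rangle .
\]
By Lemma~\ref{lem_pi}, $\pi_p(x)=\pi_p(p_i)+\pi_{p_i}(x)+o(t)$ with $t\lesssim d(p,x)$. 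Substituting gives
\[
\bigl[f(p_i)-f(p)-\langle a(p),\pi_p(p_i)\rangle\bigr]+\langle a(p_i)-a(p),\pi_{p_i}(x)\rangle+o(d(p,x)).
\]
The first term is $o(d(p,p_i))$ by hypothesis, and the second is $o(d(p_i,x))$ by continuity of $a$. Since $d(p_i,x)\lesssim r_i$ and $d(p,p_i)\lesssim d(p,x)$, this gives an error of $o(d(p,x))$. Differentiating then produces terms of size $o(d(p,x))/r_i$, which are not obviously small when $r_i\ll d(p,x)$.

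\textbf{Main obstacle and closing the argument.} The step above is where I expect the main difficulty. The fix I would try first is the Euclidean trick: compare both $P_{p_i}$ and $P_p$ with $P_{\bar p}$, where $\bar p\in E$ is a nearest point to $x$. Then $d(\bar p,p_i)\lesssim r_i$, and the same computation gives $|P_{p_i}(x)-P_{\bar p}(x)|=o(r_i)$. This bounds the first sum by $o(1)$ and shows $XF(x)$ is close to $a(\bar p)$. Since $d(\bar p,p)\le 2d(x,p)$, continuity of $a$ finishes the estimate.

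Two uniformity points need care here. The $o(t)$ in Lemma~\ref{lem_pi} must be uniform on the good set, which is exactly what that lemma provides. The estimate $|XP_q(x)-a(q)|\le\omega(d(q,x))$ must also be uniform in $q$.

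With these estimates, $F$ is continuous and differentiable along the horizontal directions at points of $E$, with $XF(p)=a(p)$. To see this, integrate $t\mapsto F(\gamma(t))$ along integral curves of $X_j$: on $E$ use the first-order expansion at $p$, and off $E$ use the bound on $XF$. Finally, $XF$ is continuous on $V$, since $XF(x)\to a(p)$ as $x\to p$ and $a$ is continuous on $E$. Hence $F\in C^1_X(V)$.
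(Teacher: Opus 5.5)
Your proposal is correct and follows essentially the same route as the paper. The ingredients match: the same local polynomials $A_p(x)=f(p)+\langle a(p),\pi_p(x)\rangle$; a Vitali/Whitney covering with radii proportional to $d(\cdot,E)$; and bumps built from the homogeneous norm $N_p$. Note that it is the bounded horizontal gradient of $N_p$ (Proposition~\ref{pro_propr}(i)), not mere comparability with $d$, that gives $|X\varphi_i|\le C/r_i$. Your key fix, comparing with $A_{\hat x}$ at a nearest point $\hat x\in E$ in the gradient estimate, is exactly Step 4. Your final integration along integral curves of $X_j$ is what the paper packages as Theorem~\ref{teo_diff}, where Venturini's result supplies the passage from continuous Lie derivative to distributional derivative.
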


Applying the above result on each $E_h$ with $f=0$ and $a=\nu_u$ is enough to prove the countable $X$-rectifiability of $\J_u$.

The paper is organized as follows. Section \ref{sec_prel} collects the notation and the preliminary results needed in the rest of the paper. In Section \ref{sec_whitney} we prove the scalar Whitney extension theorem, Theorem \ref{teo_introwhitney}, and finally, in Section \ref{sec_main}, we prove the intrinsic rectifiability of the intrinsic jump set, Theorem \ref{teo_introrect}.

\section{Notation and preliminary results}\label{sec_prel}

\begin{defi}\label{def_cc}
Let $1\leq m \leq n$ be integers and let $X=(X_1,\dots,X_m)$ be an $m$-tuple of smooth and linearly independent vector fields on $\R^n$. We say that an absolutely continuous curve $\gamma\colon[0,T] \to \R^n$ is an \emph{$X$-subunit path} joining $p$ and $q$ if $\gamma(0)=p$, $\gamma(T)=q$ and there exist $h_1,\dots, h_m \in L^\infty([0,T])$ such that $\sum_{j=1}^m h_j^2 \leq 1$ and 
\[
\gamma'(t)=\sum_{j=1}^m h_j(t)X_j(\gamma(t)), \quad \text{for a.e.\ $t\in [0,T]$.}
\]
For every $p,q \in \R^n$ we  define  
\[
d(p,q) \ceq \inf \lbrace T>0: \text{ there exists an $X$-subunit path $\gamma$ joining $p$ and $q$}\rbrace,
\]
where we agree that $\inf \emptyset \ceq +\infty$.

By the Chow–Rashevskii Theorem, if for every $p \in \R^n$ the linear span of all iterated commutators of the vector fields $X_1,\dots, X_m$ computed at $p$ has dimension $n$ (i.e. $X_1,\dots,X_m$ satisfy the \emph{H\"ormander condition}), then $d$ is a distance: the latter implies that for every couple of points of $\R^n$ there always exists an $X$-subunit path joining them. In this case we say that $(\R^n,X)$ is a \emph{Carnot-Carathéodory space} of \emph{rank} $m$ and $d$ is the associated \emph{Carnot-Carathéodory distance}.         

For every $p \in \R^n$ and for every $i \in \N$ we denote by $\mathfrak{L}^i(p)$ the linear span of all the commutators of $X_1,\dots,X_m$ up to order $i$ computed at $p$. We say that a Carnot-Carathéodory space $(\R^n,X)$ is \emph{equiregular} if there exist natural numbers $n_0,n_1,\dots,n_s$ such that 
\[
0=n_0<n_1<\cdots<n_s=n \text{ and } \dim  \mathfrak{L}^i(p)=n_i, \quad \forall p \in \R^n, \forall i\in\{1,\dots, s\}.
\]
 The natural number $s$ is called \emph{step} of the Carnot-Carathéodory space. If $(\R^n,X)$ is equiregular, then the \emph{homogeneous dimension} is $Q \ceq \sum_{i=1}^si(n_i-n_{i-1})$.
\end{defi}

\begin{notation}
 In the following, $(\R^n,X)$ denotes an equiregular Carnot-Carathéodory space associated with the family $X=(X_1,\dots,X_m)$ and with homogeneous dimension $Q$. We use $d$ to denote the Carnot-Carathéodory distance associated with $X$, $B(\cdot,\cdot)$ to denote the associated open balls, $\hau^k$ to denote the associated Hausdorff $k$-measure, $\leb^n$ to denote the usual Lebesgue measure. As customary in the literature, we always assume that metric balls are bounded with respect to the Euclidean topology.
\end{notation}

\begin{defi}\label{def_c1x}
Let $\Omega \se (\R^n,X)$ be an open set and $f\colon \Omega \to \R$. We say that $f \in C^1_X(\Omega)$ if $f$ is continuous and its \emph{horizontal gradient}  $Xf \ceq (X_1f,\dots,X_mf)$,   in the sense of distributions, is represented by a continuous function.
\end{defi}

\begin{defi}\label{def_ipersup}
We say that $S \se (\R^n,X)$ is a \emph{$C^1_X$-hypersurface} if for every $p \in S$ there exist $r>0$ and $f \in C^1_X(B(p,r))$ such that the following facts hold:
\begin{enumerate}
\item[(i)]$ S \cap B(p,r)=\lbrace q \in B(p,r):f(q)=0 \rbrace$,
\item[(ii)]$Xf\neq 0$ on $B(p,r)$.
\end{enumerate}
We define the \emph{horizontal normal} to $S$ at $p \in S$ as
\[
\nu_S(p) \ceq \frac{Xf(p)}{|Xf(p)|}.
\]
Notice that $\nu_S(p)$ is well defined up to a sign and, in particular, it does not depend on the choice of $f$, see \cite[Corollary 2.14]{dv}. We say that $S  \se (\R^n,X)$ is \emph{countably $X$-rectifiable} if there exists a family $\lbrace S_h: h \in \N \rbrace$ of $C^1_X$-hypersurfaces such that
\[
\hau^{Q-1}\left( S \setminus \bigcup_{h \in \N}S_h \right)=0.
\]
 We define the \emph{horizontal normal} of a countably $X$-rectifiable set $S$ at $p \in S$ as 
\[
\nu_S(p) \ceq \nu_{S_h}(p) \text{ if }p \in S_h \setminus \bigcup_{k<h}S_k.
\]
Notice that $\nu_S$ is well defined, up to a sign, $\hau^{Q-1}$-a.e., see \cite[Proposition 2.18]{dv}.
\end{defi}
In the following we will also use the weaker notion of $X$-Lipschitz hypersurface.
\begin{defi}\label{def_liphyp}
    We say that $S \se (\R^n,X)$ is an \emph{$X$-Lipschitz hypersurface} if for every $p \in S$ there exist $R>0$ and a Lipschitz map $f:B(p,R) \to \R$ such that 
    \begin{enumerate}
        \item[(i)]$B(p,R) \cap S=\{q \in B(p,R):f(q)=0\}$;
        \item[(ii)] there exist $C>0$ and $1 \leq j \leq m$ such that $X_jf \geq C$ $\leb^n$-a.e. on $B(p,R)$.
    \end{enumerate}
\end{defi}
Let us recall that hypersurfaces with $X$-Lipschitz or $C^1_X$ regularity have locally finite $(Q-1)$-dimensional Hausdorff measure, see \cite{vittone2012}.

\begin{defi}\label{def_approxjump}
Fix $p \in (\R^n,X)$, $R>0$ and $\nu \in \mathbb{S}^{m-1}$. Let $f \in C^1_X(B(p,R))$ be such that $f(p)=0$ and $\frac{Xf(p)}{|Xf(p)|}=\nu$. For every $r \in (0,R)$ we set
\[
B^+_\nu(p,r) \ceq B(p,r) \cap \lbrace f>0\rbrace,\qquad
B^-_\nu(p,r) \ceq B(p,r) \cap \lbrace f<0\rbrace.
\]
Now let $u \in L^1_{\loc}(\Omega)$ and $p \in \Omega$. We say that $u$ has an \emph{approximate $X$-jump} at $p$ if there exist $u^+,u^- \in \R$ with $u^+ \neq u^-$ and $\nu \in \mathbb{S}^{m-1}$ such that
\begin{equation}\label{eq_xjump}
\lim_{r \to 0}\frac{1}{\leb^n(B^+_\nu(p,r))}  \int_{B^+_\nu(p,r)} |u-u^+|d\leb^n=\lim_{r \to 0} \frac{1}{\leb^n(B^-_\nu(p,r))}\int_{B^-_\nu(p,r)} |u-u^-|d\leb^n=0.
\end{equation}
The \emph{jump set} $\J_u$ is defined as the set of points where $u$ has an approximate $X$-jump. Notice that condition \eqref{eq_xjump} does not depend on the choice of the function $f$ used to construct the sets  ${B^+_\nu(p,r)}$ and ${B^-_\nu(p,r)}$, see \cite[Proposition 2.26 and Remark 2.27]{dv}.
\end{defi}

In the following we will extensively use (a system of uniform) exponential coordinates of the first kind, that we recall. For further details we refer the reader to \cite[Section 2]{DonMagnani} and the references therein.

\begin{defi}\label{def_exp}
    Let $p \in (\R^n,X)$ be fixed; choose an open neighbourhood $W \se \R^n$ of $p$ and smooth vector fields $Y_1,\dots,Y_n$ such that 
    \begin{itemize}
        \item $Y_i=X_i$ for every $i=1,\dots,m$;
        \item  for every $k=1,\dots,s$ the vector fields $Y_{n_{k-1}+1},\dots,Y_{n_k}$ are chosen among the $k$-order commutators of $X_1,\dots,X_m$;
        \item for every $q \in W$ and every $k=1,\dots,s$ the set $\{Y_1(q),\dots,Y_{n_k}(q)\}$ is a basis of $\mathfrak{L}^k(q)$.
    \end{itemize}
 Thus $Y=(Y_1,\dots,Y_n)$ is a \emph{privileged frame} in the sense of \cite[Definition 2.8]{DonMagnani}: its local existence follows from equiregularity (see \cite[Remark 2.9]{DonMagnani}). For every $q \in W$ we may find a smooth diffeomorphism $F_{q,Y}:V_q \to F_{q,Y}(V_q) \se W$ defined as 
\[
F_{q,Y}(x) \ceq \exp(x_1Y_1+\cdots+x_nY_n)(q),
\]
for some open set $V_q \se \R^n$ containing the origin. We say that $(x_1,\dots,x_n)$ are \emph{exponential coordinates of the first kind centered at $q$}. We may also choose an open neighbourhood $V \se \R^n$ of $0$ and an open neighbourhood $U \se W$ of $p$ such that the smooth map $F_Y: U \times V \to W$ defined as
\[
F_Y(q,x) \ceq \exp(x_1Y_1+\cdots+x_nY_n)(q),
\]
is well defined on $U \times V$ and, by standard ODE arguments, $F_Y(q, \cdot): V \to F_Y(\{q\}\times V)$ is a smooth diffeomorphism for every $q \in U$. In this case, we say that $F_Y$ represents a \emph{system of uniform exponential coordinates of the first kind relative to the frame $Y=(Y_1,\dots,Y_n)$}.  For every $j=1,\dots,m$ we can also define
\[
\wx_j^p \ceq dF_{p,Y}^{-1}(X_j \circ F_{p,Y}).
\]
It is easy to check that $\wx^p=(\wx_1^p,\dots,\wx_m^p)$ satisfies the H\"ormander condition. We denote by $\wdd_p$ the CC distance in (a suitable open subset of) $\R^n$ associated with $\wx^p$ and by $\wb_p(x,r)$ the metric balls associated with $\wdd_p$. We observe in passing that, since $dF_{p,Y}(0)e_j=Y_j(p)$, we have $\wx_j^p(0)=e_j$ for every $j=1,\dots,m$. Moreover, it is easy to check that for every $p \in \R^n$ and sufficiently small $r>0$ one has    
\[
d(F_{p,Y}(x_1),F_{p,Y}(x_2))=\wdd_p (x_1,x_2) \qquad\text{for every }x_1,x_2 \in \wb_p(0,r),
\]
and, in particular, $F_{p,Y}(\wb_p(x,r))=B(F_{p,Y}(x),r)$.
\end{defi}

\begin{defi}\label{def_pb}
Using the same notation of Definition \ref{def_exp} we define the \emph{degree} of the coordinate $j$ by $Y_j(p) \in \mathfrak{L}^{w_j}(p) \setminus \mathfrak{L}^{w_{j}-1}(p)$ or, equivalently, by $n_{w_{j}-1}<j \leq n_{w_j}$. For every $r>0$, the \emph{anisotropic dilation} $\delta_r:\R^n \to\R^n$ is defined by
    \[
    \delta_r(x)=(r^{w_1}x_1,\dots,r^{w_i}x_i,\dots,r^{w_n}x_n).
    \]
   Given $\mathcal W \ceq \prod_{i=1}^nw_i$ we introduce the \emph{pseudo-norm}
    \[
    \nl x \nr_* \ceq \left (   \sum_{i=1}^n |x_i|^\frac{2 \mathcal W}{w_i}  \right)^\frac{1}{2 \mathcal W}.
    \]
    and the \emph{pseudo-balls} 
    \[
   \A(r) \ceq \{x \in \R^n: \nl x \nr_* < r\}.
    \]
    We observe in passing that the pseudo-norm $\nl \cdot \nr_*$ is smooth away from the origin.
\end{defi}

The following result is just a rewriting of \cite[Corollary 5.2]{DonMagnani}.
\begin{lem}\label{lem_unifrad}
In the same notation of Definitions \ref{def_exp} and \ref{def_pb} we can find radii $r_1,r_2,r_3>0$  such that 
\[
B(q,r_1) \subset F_Y(q, \mathscr{A}(r_2))\quad  \text{for every }q \in B(p,r_3). 
\]
\end{lem}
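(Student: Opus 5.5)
The statement is said to be a rewriting of \cite[Corollary 5.2]{DonMagnani}, so the plan is to translate that corollary into the present notation rather than argue from scratch. The underlying content is the uniform ball-box theorem in exponential coordinates of the first kind: for $q$ near $p$ and small $r$, the Carnot-Carathéodory ball $B(q,r)$ is comparable to the image under $F_Y(q,\cdot)$ of an anisotropic box, with constants uniform in $q$.

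First, I would fix the frame $Y$ and the neighbourhoods $U$, $V$ from Definition \ref{def_exp}. Since $V$ is an open neighbourhood of $0$ and $\nl\cdot\nr_*$ is a continuous pseudo-norm vanishing only at the origin, I can choose $r_2>0$ so small that $\overline{\A(r_2)}\subset V$. Then $F_Y(q,\A(r_2))$ is well defined for every $q\in U$.

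Second, I would invoke the uniform ball-box estimate of \cite{DonMagnani}, which follows from equiregularity and the fact that $Y$ is a privileged frame. It provides a compact neighbourhood $K$ of $p$ and constants $c>0$ and $\rho_0>0$ such that
\[
B(q,c\,\rho)\subset F_Y(q,\A(\rho))
\]
for all $q\in K$ and all $\rho\in(0,\rho_0)$. This is the genuinely nontrivial step. It amounts to showing that any point reachable from $q$ by a subunit path of length $c\rho$ has exponential coordinates $x$ centred at $q$ with $|x_j|\lesssim \rho^{w_j}$. The usual route is to estimate the coordinates of the endpoint of a horizontal path through a Baker-Campbell-Hausdorff-type expansion in the smooth frame, where the degree-$w_j$ components are generated by brackets of order $w_j$. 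The uniformity in $q$ comes from the smooth dependence of $F_Y$ on $q$ and from compactness. I would quote this from \cite{DonMagnani} rather than reprove it.

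Third, I would choose $r_3>0$ with $B(p,r_3)\subset K$, which is possible since CC balls are Euclidean-small for small radii. I would also shrink $r_2$ below $\rho_0$ if needed, and set $r_1\ceq c\,r_2$. The inclusion $B(q,r_1)\subset F_Y(q,\A(r_2))$ for every $q\in B(p,r_3)$ then follows directly.

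The only point needing care is making sure the radius $r_2$ chosen to keep $\A(r_2)$ inside the domain $V$ is compatible with the range of validity $\rho_0$ of the ball-box constant. Taking $r_2$ to be the minimum of the two thresholds settles this.
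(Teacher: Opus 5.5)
Your proposal is correct and follows the paper: the paper gives no argument of its own and simply invokes \cite[Corollary 5.2]{DonMagnani}. Your reduction to the uniform ball-box inclusion $B(q,c\rho)\subset F_Y(q,\A(\rho))$ for $q$ near $p$, followed by taking $r_2$ below both the domain threshold (so that $\overline{\A(r_2)}\subset V$) and the validity threshold $\rho_0$, and then setting $r_1\ceq c\,r_2$, is exactly how that citation specialises to the stated lemma.
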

By using the previous lemma, the following definition is well-posed.

\begin{defi}\label{def_proj}
Using the same notation of Definitions \ref{def_exp}, \ref{def_pb} and Lemma \ref{lem_unifrad} we define the map
\[
\Phi(q,z) \ceq F_{q,Y}^{-1}(z).
\]
By Lemma \ref{lem_unifrad} the map is well-defined (and smooth) on the open set
\[
\mathcal O \ceq \{(q,z) \in B(p,r_3) \times W: d(q,z)<r_1\}.
\]
Moreover, for every $(q,z) \in \mathcal O$ we define the \emph{horizontal projection of $z$ from $q$} as 
\[
\pi_q(z) \ceq \Phi(q,z)_H,
\]
where, here and in the following, $\R^m \ni \ell_H$ denotes the first $m$ components of $\ell \in \R^n$.  
    \end{defi}

\begin{defi}\label{def_goodset}
    Let $p \in  \R^n$ and let $r_1,r_3$ be as in Lemma \ref{lem_unifrad}. We define
    \[
    U_p \ceq B\left(  p, \frac{\min\{r_1,r_3\}}{100} \right)
    \]
Clearly, for every $a,b \in \overline{U_p}$ the map $\pi_a(b)$ is well defined and the family $\mathcal U' \ceq \{U_p: p \in \R^n\}$ covers $\R^n$. By the second countability of $(\R^n,X)$ we can extract a countable family $\mathcal U \ceq \{U_n: n \in \N\}$ of $\mathcal U'$ which is still a covering of $\R^n$. In the following we will refer to every element of $\mathcal U$ as a \emph{good set}.
\end{defi}

\begin{notation}
    In the following we will refer to every good set $U \in \mathcal U$ without making explicit reference to the fixed point $p$ or the relevant privileged frame $Y$: in particular, we will drop the subscript $Y$ in the maps $F_{q,Y}$ and refer to that application only by $F_q$.
\end{notation}

By \cite{nsw} (see also \cite[Theorem 2.6]{dv}) we can obtain the following.

\begin{teo}\label{teo_distequiv}
    Let $U$ be a good set. Then there exist $0<c_1 \leq c_2<+\infty$ such that for every $p,q  \in U$ we have
    \[
    c_1 N_p(q) \leq d(p,q) \leq c_2N_p(q), \qquad N_p(q) \ceq \nl F_p^{-1}(q) \nr_*.
    \]
\end{teo}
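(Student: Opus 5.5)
The plan is to deduce the two-sided estimate from the uniform ball-box theorem of Nagel, Stein and Wainger \cite{nsw}, in the form of \cite[Theorem 2.6]{dv} and \cite[Section 5]{DonMagnani}, and then to handle large scales by compactness.

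\textbf{Small scales.} The ball-box theorem is stated for exponential coordinates of the first kind built on a privileged frame. Applied to our frame $Y$, uniformly for centres in a compact neighbourhood of the fixed point defining $U$, it gives constants $0<a\le b$ and $\rho_0>0$ such that
\[
F_q(\A(a r))\subset B(q,r)\subset F_q(\A(b r))\qquad\text{for every }q\in \overline{U},\ 0<r<\rho_0.
\]
We shrink $\rho_0$ so that $b\rho_0<r_2$ and $\rho_0<r_1$. Then, by Lemma \ref{lem_unifrad}, all the points involved lie in the region where $F_q$ is a diffeomorphism onto its image, so $F_q^{-1}$ (that is, $\Phi(q,\cdot)$) is unambiguous. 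Uniformity in $q$ is the delicate point of \cite{nsw}: it comes from equiregularity, which makes the frame $Y$ and the weights $w_j$ independent of $q$, together with the smooth dependence of $F_Y(q,x)$ on $q$. I would quote this rather than reprove it.

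\textbf{The two inequalities at small scales.} Fix $p,q\in U$ and set $r\ceq d(p,q)$.
\begin{itemize}
\item If $r<\rho_0$, then $q\in B(p,r')$ for every $r'\in(r,\rho_0)$. Hence $q\in F_p(\A(br'))$, so $N_p(q)<br'$. Letting $r'\downarrow r$ gives $N_p(q)\le b\,d(p,q)$.
\item If $s\ceq N_p(q)<a\rho_0$, then $q\in F_p(\A(s'))$ for every $s'\in(s,a\rho_0)$. Since $F_p(\A(s'))\subset B(p,s'/a)$, we get $d(p,q)<s'/a$, and letting $s'\downarrow s$ gives $d(p,q)\le N_p(q)/a$.
\end{itemize}

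\textbf{Large scales.} The remaining pairs are those with $d(p,q)\ge\rho_0$ or $N_p(q)\ge a\rho_0$. Both quantities are bounded above on $U\times U$: $d$ is bounded by the diameter of $U$, and $N_p(q)=\|\Phi(p,q)\|_*$ is continuous on the compact set $\overline U\times\overline U\subset\mathcal O$.
\begin{itemize}
\item If $d(p,q)\ge\rho_0$, then the first bullet above (contrapositive) shows $N_p(q)\ge a\rho_0$. Indeed, $N_p(q)<a\rho_0$ would force $d(p,q)\le N_p(q)/a<\rho_0$.
\item If $N_p(q)\ge a\rho_0$, then $d(p,q)\ge \rho_0 a/b$. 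Otherwise the first bullet would give $N_p(q)\le b\,d(p,q)<a\rho_0$.
\end{itemize}
So on this set both $d(p,q)$ and $N_p(q)$ are pinched between two positive constants, and the ratio $d/N_p$ is bounded above and below there. Taking the minimum and maximum of the constants over the small-scale and large-scale regimes yields $c_1$ and $c_2$.

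\textbf{Main obstacle.} The only nontrivial step is the uniform-in-$q$ ball-box inclusion for first-kind exponential coordinates. The remaining steps are bookkeeping with Lemma \ref{lem_unifrad}, whose role is to guarantee that the preimage defining $N_p(q)$ is the one lying in $\A(r_2)$, and compactness.
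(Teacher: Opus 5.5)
Your proposal is correct and follows the paper, which simply invokes the uniform ball-box theorem of Nagel--Stein--Wainger (via \cite[Theorem 2.6]{dv}); your small-scale/large-scale bookkeeping just makes explicit the compactness step that the paper leaves implicit. One cosmetic slip: in your first large-scale case, the implication ``$N_p(q)<a\rho_0 \Rightarrow d(p,q)\le N_p(q)/a$'' comes from your second small-scale bullet, not the first.
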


We also observe the following easy properties regarding exponential coordinates.

\begin{pro}\label{pro_propr}
Let $U$ be a good set. 
\begin{itemize}
    \item[(i)]  There exists a constant $C_N>0$ such that for every, $1 \leq j \leq m$, $p, q \in U$, $p \neq q$ one has
\[
|X_jN_p(q)| \leq C_N.
\]
\item[(ii)] Let $R>0$. There exist constants $C_J>0,r_J>0$ such that for every $p \in U$, $z \in \A(R)$ and $0<r<r_J$ one has
\begin{equation}\label{eq_jac1}
C_J^{-1} \leq |\det JF_p(\delta_rz)| \leq C_J.
\end{equation}
Consequently, for every non-negative Borel function $h$ one has 
\begin{equation}\label{eq_jac2}
\int_{F_p(\delta_r \A(R))}h(y)dy=r^Q \int_{\A(R)}h(F_p(\delta_rz))|\det JF_p(\delta_rz)|dz.
\end{equation}
\end{itemize}
\end{pro}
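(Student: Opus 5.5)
We must prove Proposition (i) and (ii). Throughout, the key input is that the map $(q,x)\mapsto F_Y(q,x)$ is smooth on $U\times V$, and that $N_p(q)=\nl \Phi(p,q)\nr_*$ with $\Phi$ smooth on $\mathcal O\supset \overline U\times\overline U$.

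\textbf{Part (i).} Write $x=\Phi(p,q)=F_p^{-1}(q)$. By the chain rule,
\[
X_jN_p(q)=\langle \nabla\nl\cdot\nr_*(x),\,\widetilde X^p_j(x)\rangle ,
\]
since $dF_p^{-1}X_j=\widetilde X^p_j$. I would expand $\widetilde X^p_j(x)=\sum_i a^p_{ij}(x)\partial_{x_i}$, with coefficients smooth jointly in $(p,x)$. The needed structural fact, standard for privileged and exponential coordinates (Nagel--Stein--Wainger, or \cite{DonMagnani}), is that $a^p_{ij}$ has homogeneous degree at least $w_i-1$. That is,
\[
|a^p_{ij}(x)|\le C\nl x\nr_*^{\,w_i-1},
\]
uniformly for $p\in\overline U$ and $x$ small.

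On the other hand, $\partial_{x_i}\nl x\nr_*$ is homogeneous of degree $1-w_i$ with respect to $\delta_r$. Indeed, $\nl\delta_rx\nr_*=r\nl x\nr_*$, and differentiating gives $\partial_i\nl\cdot\nr_*(\delta_rx)=r^{1-w_i}\partial_i\nl\cdot\nr_*(x)$. Being continuous on the unit pseudo-sphere, it is bounded by $C\nl x\nr_*^{1-w_i}$.

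Multiplying these two bounds, every term is bounded by a constant, which gives $C_N$. Uniformity in $p\in U$ follows from compactness of $\overline U$ and the joint smoothness of $\Phi$ on $\mathcal O$.

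The main obstacle is justifying the degree estimate on $a^p_{ij}$ uniformly in $p$. I would derive it from the uniform estimates in \cite{DonMagnani} (Corollary 5.2 and the surrounding results, which give the uniform ball–box theorem). Alternatively, one can Taylor expand $dF_p^{-1}(X_j\circ F_p)$ in $x$ using the BCH formula and the fact that the $Y_i$ are commutators of the correct order.

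\textbf{Part (ii).} The estimate $JF_p(0)=(Y_1(p),\dots,Y_n(p))$ is invertible for each $p$. By continuity on the compact set $\overline U$, $|\det JF_p(0)|$ lies in $[2C_J^{-1},C_J/2]$ for some $C_J$.

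For $z\in\A(R)$ and $r<1$, we have $|\delta_rz|\le \max_i r^{w_i}|z_i|\le C(R)\,r$. Since $(p,x)\mapsto\det JF_p(x)$ is uniformly continuous on $\overline U\times\overline{B(0,\rho)}$, we may choose $r_J$ so that $|\det JF_p(\delta_rz)-\det JF_p(0)|\le C_J^{-1}$ for all $r<r_J$. This gives \eqref{eq_jac1}. Shrinking $r_J$ further also ensures $\delta_r\A(R)\subset V$.

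For \eqref{eq_jac2}, apply the change of variables $y=F_p(x)$, which is valid because $F_p$ is a diffeomorphism on $V$. Then substitute $x=\delta_rz$, whose Jacobian determinant is $\prod_i r^{w_i}=r^{\sum_i w_i}=r^Q$. The last identity uses the definition $Q=\sum_k k(n_k-n_{k-1})$.
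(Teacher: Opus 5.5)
Your proposal is correct and follows the paper's proof essentially step for step. For (i), the paper likewise combines the uniform weighted coefficient bound $|b_{ji}(x)|\le C\|x\|_*^{w_i-1}$ (taken from \cite{mpv18} and \cite[Theorem 2.16]{DonMagnani}) with the $\delta$-homogeneity of degree $1-w_i$ of $\partial_{x_i}\|\cdot\|_*$. For (ii), it uses $JF_p(0)=(Y_1(p),\dots,Y_n(p))$, compactness of $\overline U$, $\delta_r z\to 0$ uniformly on $\A(R)$, and $|\det D\delta_r|=r^Q$.

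One remark on your stated obstacle: the coefficient bound cannot be extracted from the ball--box comparison of Corollary 5.2 alone, since comparability of $N_p$ with $d$ gives no control on derivatives. The appropriate sources are the structural results on exponential/privileged coordinates, or your BCH/Taylor-expansion alternative.
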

\begin{proof}
For (i), write, in uniform exponential coordinates of the first kind,
\[
\widetilde X_j^p=\sum_{i=1}^n b_{ji}(x)\partial_{x_i}.
\]
By \cite[Section 2]{mpv18} (see also \cite[Theorem 2.16 and Remark 2.17]{DonMagnani}) we obtain (since the pseudo-norm defined in Definition \ref{def_pb} is equivalent to the one defined in \cite{mpv18}), together with the uniformity argument in the proof of \cite[Theorem 4.6]{DonMagnani},
\[
|b_{ji}(x)|\le C\|x\|_*^{w_i-1},\qquad j=1,\dots,m,\quad i=1,\dots,n,
\]
for some constant $C>0$, uniformly in $p$. Since $\|\cdot\|_*$ is smooth away from the origin and is $\delta$-homogeneous of degree one, $\partial_{x_i}\|\cdot\|_*$ is $\delta$-homogeneous of degree $1-w_i$. Hence
\[
|\partial_{x_i}\|x\|_*|
   \le C\|x\|_*^{1-w_i},\qquad x\ne0.
\]
It follows that
\[
|\widetilde X_j\|x\|_*|
 \le \sum_{i=1}^n
 |b_{ji}(x)|\,|\partial_{x_i}\|x\|_*|
 \le C_N .
\]
Taking $x=F_p^{-1}(q)$ proves (i). For (ii), recall that
\[
dF_{p,Y}(0)e_i=Y_i(p),\qquad i=1,\dots,n,
\]
and therefore
\[
|\det JF_{p,Y}(0)| =|\det(Y_1(p),\dots,Y_n(p))|>0.
\]
Therefore, by the compactness of $\overline U$ we obtain
\[
0<c\le |\det JF_p(0)|\le C<+\infty
\qquad\text{for every }p\in\overline U.
\]
As $\delta_r z\to0$ uniformly for $z\in\A(R)$ as $r\to0$, the smoothness of $(p,x)\mapsto\det JF_{p,Y}(x)$ yields $C_J\ge1$ and $r_J>0$ such that
\[
C_J^{-1}\le|\det JF_p(\delta_rz)|\le C_J
\]
for $p\in U$, $z\in\A(R)$ and $0<r<r_J$, proving \eqref{eq_jac1}. Finally,  \eqref{eq_jac2} follows from the usual change of variables formula and the fact that $|\det D\delta_r|=r^{w_1+\cdots+w_n}=r^Q$.
\end{proof}

In the following Lemmata we collect some basic properties of horizontal projections that will be pivotal in the sequel.

\begin{lem}\label{lem_pi}
Let $U$ be a good set. There exist a constant $C_\pi> 0$ and a modulus of continuity $\omega_\pi(t)\xrightarrow{t \to 0}0$, such that for every $p,q,r \in U$, $1 \leq j \leq m$ one has
 \begin{align}
\label{eq_pro1}|\pi_p(q)| &\leq C_\pi d(p,q),\\
\label{eq_pro2}|X_j(\pi_p(q))-e_{j}| &\leq \omega_\pi(d(p,q)),\\
\label{eq_pro3}|\pi_p(r)-\pi_q(r)+\pi_q(p)| &\leq \omega_\pi(t)t,
 \end{align}
 whenever $d(p,q)+d(p,r)+d(q,r) \leq t$.
\end{lem}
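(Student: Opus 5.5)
The plan is to work entirely with the smooth map $\Phi(q,z)=F_q^{-1}(z)$ on $\mathcal O$ and to use Theorem~\ref{teo_distequiv} for the comparison between $d$ and the pseudo-norm. Since $\pi_p(q)=\Phi(p,q)_H$ is the horizontal (degree-one) block of $F_p^{-1}(q)$, and $|x_i|\le \|x\|_*^{w_i}$, we get for $i\le m$ that $|\Phi(p,q)_i|\le N_p(q)\le c_1^{-1}d(p,q)$. This gives \eqref{eq_pro1} with $C_\pi=c_1^{-1}\sqrt m$.

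For \eqref{eq_pro2}, write $X_j(\pi_p(\cdot))$ at $q$ in exponential coordinates centered at $p$. Setting $x=F_p^{-1}(q)$, we have $X_j(\pi_p(\cdot))(q)=\widetilde X^p_j(x_H)=(b_{j1}(x),\dots,b_{jm}(x))$, where the $b_{ji}$ are the coefficients from the proof of Proposition~\ref{pro_propr}. Since $\widetilde X_j^p(0)=e_j$ and the coefficients depend smoothly on $(p,x)$, uniform continuity on the compact closure of $U\times$(a neighbourhood of $0$) gives $|b_{ji}(x)-\delta_{ji}|\le\omega(|x|)$. Finally, $|x|\to0$ uniformly as $d(p,q)\to0$, again by Theorem~\ref{teo_distequiv} and the fact that $\|x\|_*\to0$ forces $|x|\to0$. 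This yields \eqref{eq_pro2}.

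The core is \eqref{eq_pro3}. I would fix $q,r$ and consider $g(z)\ceq\pi_z(r)-\pi_q(r)+\pi_q(z)$, which vanishes at $z=q$. The function $g$ is smooth in $z$, and the aim is to show that its \emph{horizontal} derivatives at $z$ near $q$ are $o(1)$. Then integrating along a subunit path from $q$ to $p$ of length $\approx d(p,q)\le t$ gives $|g(p)|\le o(1)\,t$. Along such a path $\gamma$ (which stays within distance $\lesssim t$ of $q$ and $r$),
\[
\frac{d}{ds}g(\gamma(s))=\sum_j h_j(s)\bigl(X_j[z\mapsto\pi_z(r)](\gamma)+X_j\pi_q(\gamma)\bigr),
\]
and by \eqref{eq_pro2} the second term equals $e_j+o(1)$. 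So it suffices to prove
\[
X_j[z\mapsto\pi_z(r)]=-e_j+o(1)\qquad\text{uniformly as } d(z,r)\to0 .
\]
This holds at $z=r$: differentiating the identity $\Phi(z,F_z(x))=x$ at $x=0$ gives $D_1\Phi(z,z)=-D_2\Phi(z,z)$. Moreover, $D_2\Phi(z,z)X_j(z)=dF_z(0)^{-1}Y_j(z)=e_j$, so $D_1\Phi(z,z)X_j(z)=-e_j$. Uniform continuity of $D_1\Phi$ on a compact neighbourhood of the diagonal inside $\mathcal O$ then gives the claim for $d(z,r)$ small, since Euclidean closeness is controlled by $d$ on $\overline U$.

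The main obstacle is the uniformity, together with the requirement that the subunit path stay inside the region where $\Phi$ is defined. Taking a near-geodesic of length at most $2d(p,q)$ keeps it within distance $2t$ of $q$ and $3t$ of $r$. By the choice of the good set (radius $\min\{r_1,r_3\}/100$), it remains in $B(p_0,r_3)$ with $d(\cdot,r)<r_1$ for $t$ small, and for large $t$ the estimate is trivial by \eqref{eq_pro1} after enlarging $\omega_\pi$. A further, milder point is checking that the Euclidean moduli convert into moduli in $d$; this uses continuity of the identity map $(\overline U,d)\to(\overline U,|\cdot|)$ on a compact set. Taking $\omega_\pi$ as the maximum of the three moduli obtained completes the proof.
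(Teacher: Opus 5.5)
Your proof is correct. For \eqref{eq_pro1} and \eqref{eq_pro2} you argue essentially as the paper does. The paper phrases \eqref{eq_pro2} through $G_j(a,z)=X_j(\pi_a)(z)$, which equals $e_j$ on the diagonal and is Lipschitz there, giving the linear modulus $C'd(p,q)$; your version via the coefficients $b_{ji}$ at $x=F_p^{-1}(q)$ is equivalent.

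For \eqref{eq_pro3} you take a genuinely different route. The paper treats $H(a,b,c)=\pi_a(c)-\pi_b(c)+\pi_b(a)$ as a smooth function of three Euclidean variables. From $D_1\pi_a(a)+D_2\pi_a(a)=0$ it deduces that the \emph{full} differential of $H$ vanishes on the diagonal. A second-order Euclidean Taylor expansion then gives $|H(p,q,r)|\le C(|p-q|+|p-r|+|q-r|)^2\le Ct^2$, after the comparison $|x-y|\lesssim d(x,y)$.

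You instead freeze $q,r$ and show only that the \emph{horizontal} derivatives of $g(z)=\pi_z(r)-\pi_q(r)+\pi_q(z)$ are small near $q$ and $r$. The same identity $D_1\Phi(z,z)=-D_2\Phi(z,z)$ makes $D_1\Phi(z,r)X_j(z)\approx -e_j$ cancel against $X_j\pi_q\approx e_j$ from \eqref{eq_pro2}. You then integrate along a near-geodesic subunit path from $q$ to $p$ of length at most $2d(p,q)$.

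The paper's argument is shorter and directly quantitative ($\omega_\pi(t)=Ct$), but it uses second derivatives and a Taylor formula on Euclidean segments near the diagonal. Yours is a first-order mean-value argument along horizontal curves. It needs only continuity of $D_1\Phi$ and measures increments directly in $d$. The price is checking that the path stays in $\mathcal O$; the small radius $\min\{r_1,r_3\}/100$ of the good set takes care of this for all $p,q,r\in U$. It also yields only $\omega(t)\,t$ rather than $O(t^2)$, unless you additionally use the Lipschitz continuity of $D_1\Phi$.
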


\begin{proof}
Since the first $m$ coordinates have weight one, for every $x\in\mathbb R^n$ one has
\[
|x_H|\le \sqrt m\|x\|_*.
\]
Therefore, by Theorem \ref{teo_distequiv},
\[
|\pi_p(q)|=|(F_p^{-1}(q))_H|\le \sqrt mN_p(q)\le \frac{\sqrt m}{c_1}d(p,q),
\]
which proves \eqref{eq_pro1}. By Definition \ref{def_proj}, $(a,z) \mapsto \pi_a(z) $ is smooth on $\overline U\times\overline U$. For $1 \leq j\le m$ define
\[
G_{j}(a,z) \ceq X_j\big(\pi(a,\cdot)\big)(z).
\]
Since
\[
dF_a^{-1}(a)X_j(a)=e_j,
\]
we have
\[
G_{j}(a,a)=e_j.
\]
The functions $G_{j}$ have uniformly bounded first derivatives on a fixed compact neighbourhood of the diagonal. Hence
\[
|G_{j}(p,q)-e_j|
 \le C|p-q|
 \le C'd(p,q),
\]
where in the last inequality we used the local comparison between the Euclidean and Carnot-Carath\'eodory distances (see for instance \cite[Theorem 2.2(i)]{dv}). This proves \eqref{eq_pro2}, for instance with a modulus $\omega_1(t)=C't$ for small $t$. To prove  \eqref{eq_pro3}, define
\[
H(a,b,c) \ceq \pi_a(c)-\pi_b(c)+\pi_b(a).
\]
Clearly $H(a,a,a)=0$. Moreover, from $\pi_a(a)=0$ we obtain
\[
D_1\pi_a(a)+D_2\pi_a(a)=0,
\]
where $D_1,D_2$ denote the derivatives (with respect to each of the two variables) of the function $(a,z) \mapsto \pi_a(z)$. It follows immediately that all first derivatives of $H$ vanish on the diagonal, that is $DH(a,a,a)=0$. A uniform second-order Taylor estimate on a compact neighbourhood of the diagonal therefore gives
\[
|H(p,q,r)|  \le C\big(|p-q|+|p-r|+|q-r|\big)^2.
\]
Using \cite[Theorem 2.2(i)]{dv}, whenever
\[
d(p,q)+d(p,r)+d(q,r)\le t
\]
we obtain
\[
|\pi_p(r)-\pi_q(r)+\pi_q(p)|\le Ct^2.
\]
Thus \eqref{eq_pro2} and \eqref{eq_pro3} hold, after taking for instance $\omega_\pi(t)=Ct$ for $t$ sufficiently small and extending it to a modulus of continuity on $[0,+\infty)$.
\end{proof}

\begin{lem}\label{lem_tr}
  Let $U$ be a good set and $R>0$.  There exist radii $R',R''>0$ and a modulus of continuity  $\omega_R(t)\xrightarrow{t \to 0}0$ such that for every $p \in U$, $a,b \in \A(R)$, $0<\rho<R'$, if we define 
    \[
   q \ceq F_p(\delta_\rho a), \qquad  s \ceq F_p(\delta_\rho b),
   \]
   we have
      \begin{equation}\label{eq_est1}
   \left| \frac{\pi_q(s)}{\rho}-(b_H-a_H)   \right| \leq \omega_R(\rho) \quad
      \end{equation}
   and
   \begin{equation}\label{eq_est2}
        s \in F_q(\delta_\rho \A(R'')).
   \end{equation}
\end{lem}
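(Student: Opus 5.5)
The lemma to prove is Lemma \ref{lem_tr}. The idea is to write $\pi_q(s)$ through the smooth map $\Phi(q,z)=F_q^{-1}(z)$ and to exploit the homogeneity of the dilations. Put $x\ceq\delta_\rho a$ and $y\ceq\delta_\rho b$, so that $q=F_p(x)$, $s=F_p(y)$ and $\Phi(q,s)=F_{F_p(x)}^{-1}(F_p(y))$. Consider the smooth map
\[
\Psi(p,x,y)\ceq F_{F_p(x)}^{-1}(F_p(y)),
\]
defined for $p\in\overline U$ and $x,y$ in a small neighbourhood of $0$. Since $F_p$ is smooth in $(p,x)$ and $\Phi$ is smooth on $\mathcal O$, $\Psi$ is smooth there; Lemma \ref{lem_unifrad} and Theorem \ref{teo_distequiv} guarantee that for $\rho<R'$ small the points $q,s$ lie in $U$ at mutual distance $<r_1$. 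The plan is to compare $\Psi(p,x,y)$ with the Baker--Campbell--Hausdorff-type expansion of $\exp(-\sum x_iY_i)\circ\exp(\sum y_iY_i)$ at $p$, graded by weights.

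For \eqref{eq_est1} I only need the first-order Euclidean Taylor expansion of the horizontal components. One has $\Psi(p,x,x)=0$ and $\Psi(p,0,y)=y$. Writing $\Psi_H(p,x,y)=\Psi_H(p,x,y)-\Psi_H(p,x,x)$, the mean value theorem gives
\[
\Psi_H(p,x,y)=D_y\Psi_H(p,x,\xi)(y-x)
\]
for some $\xi$ on the segment. Since $D_y\Psi(p,0,0)=\mathrm{Id}$, one gets $D_y\Psi_H(p,x,\xi)=(\mathrm{Id}_{m\times n})+O(|x|+|y|)$ uniformly in $p\in\overline U$. The horizontal rows then give $(y-x)_H+O(|x|+|y|)\,|y-x|$. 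Now $(y-x)_H=\rho(b_H-a_H)$, while $|x|+|y|\le C\rho$ and $|y-x|\le C\rho$ for $a,b\in\A(R)$ and $\rho\le1$, since the weights are $\ge1$. Dividing by $\rho$ yields the bound $C\rho$, so $\omega_R(\rho)=C_R\rho$ works.

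The main obstacle is \eqref{eq_est2}, which requires the anisotropic estimate $|\Psi_i(p,x,y)|\le C\rho^{w_i}$ for every $i$; the first-order Euclidean Taylor bound only gives $C\rho$. I would prove it metrically instead. By Theorem \ref{teo_distequiv} and the triangle inequality,
\[
d(q,s)\le d(p,q)+d(p,s)\le c_2(\|\delta_\rho a\|_*+\|\delta_\rho b\|_*)<2c_2R\rho.
\]
Applying Theorem \ref{teo_distequiv} again with base point $q\in U$ gives
\[
\|F_q^{-1}(s)\|_*\le c_1^{-1}d(q,s)<2c_2c_1^{-1}R\rho.
\]
By the homogeneity $\|\delta_\rho z\|_*=\rho\|z\|_*$, this means $F_q^{-1}(s)\in\delta_\rho\A(R'')$ with $R''\ceq 2c_2R/c_1$, which is \eqref{eq_est2}. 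This step requires $q,s\in U$, which I would ensure by taking $R'$ small so that $c_2R R'$ is smaller than the distance from $p$ to $\partial U$ (shrinking $U$ slightly if needed). Alternatively one can state the lemma for $p$ in a compact subset of $U$, using that good sets are $1/100$ of the radii from Lemma \ref{lem_unifrad}, so that Theorem \ref{teo_distequiv} still applies on the slightly larger ball.
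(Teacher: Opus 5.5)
Your proof is correct. For \eqref{eq_est2} it is the paper's argument (Theorem \ref{teo_distequiv} applied at base point $p$, then at $q$), but for \eqref{eq_est1} you take a different route.

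\textbf{How the paper proves \eqref{eq_est1}.} It does not expand the transition map directly. It applies the approximate cocycle relation \eqref{eq_pro3} of Lemma \ref{lem_pi}, extended to a slightly larger set $\widetilde U$, to the triples $(p,q,s)$ and $(p,q,q)$. It then combines this with the exact identities $\pi_p(q)=\rho a_H$ and $\pi_p(s)=\rho b_H$. This mirrors the Carnot-group identity $\pi_q(s)=\pi_p(s)-\pi_p(q)$ and keeps everything at the level of the projections. The cost is that it needs $d(p,q)+d(p,s)\le C_R\rho$ from the distance comparison.

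\textbf{How you prove it.} You Taylor-expand $\Psi(p,x,y)=F^{-1}_{F_p(x)}(F_p(y))$ in $y$ around $x$, using $\Psi(p,x,x)=0$ and $D_y\Psi(p,0,\cdot)=\mathrm{Id}$. This is shorter and self-contained. It needs only smoothness of $\Psi$ near $\overline U\times\{0\}\times\{0\}$ and the Euclidean bound $|\delta_\rho a|\le C\rho$, so no metric comparison is needed for \eqref{eq_est1}. It also gives the linear modulus $C_R\rho$ explicitly. Since \eqref{eq_pro3} is itself proved by a second-order Taylor expansion, the two arguments are the same smoothness estimate packaged differently.

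\textbf{Two small repairs.}

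First, the mean value theorem with a single intermediate point $\xi$ is false for vector-valued maps. Write instead
$\Psi_H(p,x,y)=\int_0^1 D_y\Psi_H\bigl(p,x,x+t(y-x)\bigr)\,dt\,(y-x)$, which gives the same bound.

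Second, for \eqref{eq_est2}, choosing $R'$ in terms of the distance from $p$ to $\partial U$ is not uniform in $p\in U$. Shrinking $U$, or restricting $p$ to a compact subset, would weaken the statement. The correct fix, which is the paper's, is to keep $p\in U$ and use Theorem \ref{teo_distequiv} on an enlarged set $\widetilde U$ with $\overline U\subset\subset\widetilde U$. This is available because of the factor $1/100$ in Definition \ref{def_goodset}. Then take $R'$ so small that $q,s\in\widetilde U$ for all $p\in U$ and $a,b\in\A(R)$. This follows, for instance, from the uniform continuity of $(p,x)\mapsto F_p(x)$ on a compact set together with $F_p(0)=p$. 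It also removes the circularity of bounding $d(p,q)$ via Theorem \ref{teo_distequiv} before knowing that $q$ lies in the set where that theorem holds.
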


\begin{proof}
Fix a slightly larger open set $\widetilde U$ such that $\overline U\subset \subset \widetilde U$ and such that the uniform exponential coordinates used in the definition of $U$ are defined on $\widetilde U$.\footnote{This can be done, for instance, by considering $\widetilde U \ceq B(p,\frac{\min\{r_1,r_3\}}{50})$, where $p,r_1,r_3$ are as in Definition \ref{def_goodset}.} By the same proof as in Lemma \ref{lem_pi}, estimate \eqref{eq_pro3} holds on $\widetilde U$, possibly with a different modulus, which we still denote by $\omega_\pi$. By Theorem \ref{teo_distequiv}, there exist $C_R>0$ and $R_0'>0$ such that, for $p\in U$, $a,b\in\A(R)$ and $0<\rho<R_0'$,
\[
d(p,q)+d(p,s)\le C_R\rho.
\]
In particular,
\[
d(q,s)\le 2C_R\rho.
\]
After decreasing $R_0'$ if necessary, this also ensures that $q,s\in\widetilde U$. Applying \eqref{eq_pro3} to $(p,q,s)$ gives
\[
|\pi_p(s)-\pi_q(s)+\pi_q(p)|\le 4C_R\rho\omega_\pi(4C_R\rho),
\]
while \eqref{eq_pro3} applied to $(p,q,q)$ gives
\[
|\pi_p(q)+\pi_q(p)| \le 2C_R\rho\omega_\pi(2C_R\rho).
\]
Since
\[
\pi_p(q)=\rho a_H,\qquad \pi_p(s)=\rho b_H,
\]
we deduce
\[
\left| \pi_q(s)-\rho(b_H-a_H)\right|\le 6C_R\rho\omega_\pi(4C_R\rho).
\]
Thus \eqref{eq_est1} follows by setting $\omega_R(\rho)\ceq 6C_R\omega_\pi(4C_R\rho)$. Finally, applying once more Theorem \ref{teo_distequiv}, now in coordinates centered at $q$, we obtain
\[\|F_q^{-1}(s)\|_*\le Cd(q,s) \le 2CC_R\rho .
\]
Hence, choosing $R''>2CC_R$ and decreasing $R'\ceq R_0'$ if necessary, we have
\[
\delta_{1/\rho}F_q^{-1}(s)\in\A(R''),
\]
which is equivalent to \eqref{eq_est2} and concludes the proof.
\end{proof}

To conclude this section we recall that by using (uniform) exponential coordinates of the first kind we can give an equivalent definition of approximate $X$-jumps that will be useful in the sequel, see \cite[Proposition 2.26]{dv}.

\begin{pro}\label{pro_altern}
   Let $u \in L^1_\loc(\Omega,\R)$ and $p \in \Omega$. The function $u$ has an \emph{approximate $X$-jump} at $p$ if and only if, working in (uniform) exponential coordinates of the first kind, as $r \to 0$ the functions $u_{p,r} \ceq u \circ F_p \circ \delta_r$ converge in $L^1_\loc(\R^n,\R)$ to 
   \[
   w_{u^+,u^-,\nu}(y) \ceq \begin{cases}
       u^+ \qquad \text{ if }L_\nu(y)>0\\
       u^- \qquad \text{ if }L_\nu(y)<0,
   \end{cases}
   \]
   where $u^+,u^- \in \R, \nu \in \mathbb{S}^{m-1}$ are as in Definition \ref{def_approxjump} and, here and in the following, for the sake of brevity, we use the notation below. 
   \[
   L_\nu(y) \ceq \sum_{i=1}^m \nu_iy_i, \qquad \nu \in \mathbb{S}^{m-1},y \in \R^n.
   \]
\end{pro}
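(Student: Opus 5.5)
The statement just above is Proposition \ref{pro_altern}. It is cited from \cite[Proposition 2.26]{dv}, but here is how I would prove it directly from Definition \ref{def_approxjump} using the tools of Section \ref{sec_prel}. The idea is to work in exponential coordinates centered at $p$. I would choose the defining function $f(q)\ceq L_\nu(F_p^{-1}(q))=\langle \nu,\pi_p(q)\rangle$. This choice is admissible because $f(p)=0$ and, by \eqref{eq_pro2}, $Xf(p)=\nu$; by the remark after Definition \ref{def_approxjump}, the jump condition does not depend on $f$. With this $f$,
\[
F_p^{-1}\bigl(B^\pm_\nu(p,r)\bigr)=\wb_p(0,r)\cap\{\pm L_\nu>0\},
\]
and since $L_\nu$ depends only on horizontal coordinates, which have weight one, $\{\pm L_\nu>0\}$ is invariant under every $\delta_r$. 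Changing variables via $y=\delta_r z$ and using \eqref{eq_jac1}--\eqref{eq_jac2}, the averages in \eqref{eq_xjump} are comparable, up to the factor $C_J^2$, to
\[
\frac{1}{\leb^n(D_r^\pm)}\int_{D_r^\pm}|u_{p,r}-u^\pm|\,dz,\qquad D_r^\pm\ceq \delta_{1/r}\wb_p(0,r)\cap\{\pm L_\nu>0\}.
\]

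The next step is to compare $\delta_{1/r}\wb_p(0,r)$ with pseudo-balls. By Theorem \ref{teo_distequiv}, $\A(r/c_2)\subset\wb_p(0,r)\subset\A(r/c_1)$ for small $r$, so
\[
\A(1/c_2)\subset \delta_{1/r}\wb_p(0,r)\subset\A(1/c_1).
\]
The measures $\leb^n(D_r^\pm)$ are then bounded above and below uniformly in $r$; for the lower bound, $\A(1/c_2)\cap\{\pm L_\nu>0\}$ has positive measure.

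For the implication ``jump $\Rightarrow$ convergence'', fix $R>0$ and a scale $\lambda=Rc_2$. Applying the definition at radius $\lambda r$ gives
\[
\int_{\A(R)\cap\{\pm L_\nu>0\}}|u_{p,\lambda r}-u^\pm|\to0,
\]
and rescaling back (using $u_{p,\lambda r}=u_{p,r}\circ\delta_\lambda$) yields $\int_{\A(R')\cap\{\pm L_\nu>0\}}|u_{p,r}-u^\pm|\to0$. Since the hyperplane $\{L_\nu=0\}$ is Lebesgue-null, this is exactly $L^1(\A(R'))$ convergence to $w_{u^+,u^-,\nu}$. Pseudo-balls exhaust $\R^n$, so this gives $L^1_\loc$ convergence.

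For the converse, $L^1_\loc$ convergence gives $\int_{\A(1/c_1)}|u_{p,r}-w|\to0$. The integral over $D_r^\pm\subset\A(1/c_1)\cap\{\pm L_\nu>0\}$ is dominated by this quantity, and the denominators are bounded below, so \eqref{eq_xjump} follows. The only delicate points are that the change of variables \eqref{eq_jac2} is stated on sets of the form $F_p(\delta_r\A(R))$, which is handled by the inclusions above together with Lemma \ref{lem_unifrad}, and that the isometry $F_p\colon\wb_p(0,r)\to B(p,r)$ holds only for small $r$, which is harmless because both conditions concern the limit $r\to0$. I do not expect a substantial obstacle beyond keeping track of the uniform constants.
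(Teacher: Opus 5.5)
Your change-of-variables argument is correct for the particular defining function $f_0=\langle\nu,\pi_p(\cdot)\rangle$. The identity $F_p^{-1}\bigl(B(p,r)\cap\{\pm f_0>0\}\bigr)=\wb_p(0,r)\cap\{\pm L_\nu>0\}$, the $\delta_r$-invariance of the half-spaces, the inclusions $\A(1/c_2)\subset\delta_{1/r}\wb_p(0,r)\subset\A(1/c_1)$ and the Jacobian bounds do give the equivalence for $f_0$.

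The genuine gap is the reduction to $f_0$. You justify it by the remark after Definition \ref{def_approxjump}. The paper supports that remark only by citing \cite[Proposition 2.26 and Remark 2.27]{dv}, i.e.\ the very statement you are proving: there, independence of $f$ is a consequence of the blow-up characterization, because the limit profile depends only on $\nu$. So the argument is circular. It also skips exactly the nontrivial content of the proposition: for an arbitrary $f\in C^1_X$ with $f(p)=0$ and $Xf(p)/|Xf(p)|=\nu$, the half-balls $B(p,r)\cap\{\pm f>0\}$ can be replaced by $B(p,r)\cap\{\pm f_0>0\}$ at negligible cost in the averages.

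To close the gap, use the converse part of Theorem \ref{teo_diff} at $p$. Since $\pi_p(F_p(\delta_rz))=rz_H$ and $d(p,F_p(\delta_rz))\le c_2Rr$ for $z\in\A(R)$, you get $f(F_p(\delta_rz))=r\bigl(|Xf(p)|L_\nu(z)+\epsilon_r(z)\bigr)$ with $\sup_{\A(R)}|\epsilon_r|\to0$. Hence, inside $\A(R)$, the rescaled sets $\{\pm f\circ F_p\circ\delta_r>0\}$ and $\{\pm L_\nu>0\}$ differ only within a slab $\{|L_\nu|\le\varepsilon_r\}$ with $\varepsilon_r\to0$, whose measure tends to zero.

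For ``convergence $\Rightarrow$ jump'' this suffices, since $|u_{p,r}-u^\pm|\le|u_{p,r}-w_{u^+,u^-,\nu}|+|u^+-u^-|$ on the slab. For ``jump $\Rightarrow$ convergence'' you must also control $u_{p,r}$ on the slab, where only the condition for $f$ is available. Split the slab according to the sign of $f\circ F_p\circ\delta_r$, use \eqref{eq_xjump} for $f$ on each part together with $|u^\pm-w_{u^+,u^-,\nu}|\le|u^+-u^-|$. You also need that $\{f=0\}$ is $\leb^n$-negligible near $p$. This holds because it is a $C^1_X$-hypersurface there, hence of locally finite $\hau^{Q-1}$-measure. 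Without this, $u$ could be arbitrary on a set that \eqref{eq_xjump} does not see.

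A minor slip: applying the definition at radius $\lambda r$ and rescaling by $\delta_r$ (not $\delta_{\lambda r}$) directly gives $\int_{\A(R)\cap\{\pm L_\nu>0\}}|u_{p,r}-u^\pm|\to0$, since $F_p^{-1}(B(p,\lambda r))\supset\A(\lambda r/c_2)=\delta_r\A(R)$. Your intermediate display with $u_{p,\lambda r}$ on $\A(R)$ is not what the definition yields, though your final conclusion is right.
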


\section{Scalar Whitney extension theorem}\label{sec_whitney}
In this section we prove a scalar Whitney extension theorem: we follow the classical approach that can be found, for instance, in \cite[Section 6.5]{evansgariepy} and has been adapted to the setting of Carnot groups in \cite[Theorem 5.2]{fssc03}. Before doing so, we need the following auxiliary result.

\begin{teo}\label{teo_diff}
Let $U$ be a good set, $V \se U$ be an open set, $H \in C(V)$ and $b \in C(V,\R^m)$. Assume that, for every $q \in V$, one has 
 \begin{equation}\label{eq_conddiff}
 H(x)-H(q)-\langle b(q),\pi_q(x) \rangle=o(d(x,q)) \qquad \text{ as }x \to q.
 \end{equation}
 Then $H \in C^1_X(V)$ and $XH=b$ (in the sense of distributions). Conversely, every $H \in C^1_X(V)$ satisfies \eqref{eq_conddiff} with $XH$ in place of $b$.
\end{teo}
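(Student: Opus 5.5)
Both directions reduce to comparing $H$ along integral curves of the $X_j$ with the first-order expansion \eqref{eq_conddiff}, using \eqref{eq_pro1}–\eqref{eq_pro2} of Lemma~\ref{lem_pi} to control $\pi_q$ along such curves.

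\emph{Direct implication.} Fix $j$ and $q\in V$, and let $\gamma(t)=\exp(tX_j)(q)$. Then $d(\gamma(t),q)\le|t|$. Since $\pi_q(q)=0$ and $\frac{d}{dt}\pi_q(\gamma(t))=X_j(\pi_q)(\gamma(t))$, estimate \eqref{eq_pro2} gives
\[
\pi_q(\gamma(t))=te_j+o(t).
\]
Plugging this into \eqref{eq_conddiff} yields
\[
H(\gamma(t))-H(q)=t\,b_j(q)+o(t).
\]
So $H$ is differentiable along every integral curve of $X_j$, with derivative $b_j$, which is continuous. The function $t\mapsto H(\gamma(t))$ is therefore $C^1$ with derivative $b_j(\gamma(t))$, and $H(\gamma(t))-H(q)=\int_0^t b_j(\gamma(\tau))\,d\tau$.

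To pass to distributions, take $\varphi\in C^\infty_c(V)$ and let $\Phi_t$ be the flow of $X_j$. Write $X_j^*=-X_j-\operatorname{div}X_j$ for the formal adjoint. The quantity $\int H\,X_j^*\varphi$ is the limit of difference quotients
\[
\frac1t\int (H\circ\Phi_t-H)\,\varphi,
\]
after a change of variables that involves the Jacobian of $\Phi_t$. Using the integral formula above and dominated convergence (note that $b$ is bounded on $\operatorname{supp}\varphi$), these quotients converge to $\int b_j\varphi$. Hence $X_jH=b_j$ in $\mathcal D'(V)$, and $H\in C^1_X(V)$ because $H$ and $b$ are continuous.

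\emph{Converse.} Let $H\in C^1_X(V)$. The main step is to show that, locally uniformly in $q$,
\[
|H(x)-H(q)-\langle XH(q),\pi_q(x)\rangle|\le \varepsilon\,d(x,q)\qquad\text{for }d(x,q)<\delta .
\]
Mollification does not commute with the $X_j$, so I would invoke Friedrichs' lemma instead. Take $H_\varepsilon=H*\rho_\varepsilon$. Then $H_\varepsilon\to H$ locally uniformly and $X_jH_\varepsilon\to X_jH$ locally uniformly, since the commutator $[X_j,\ast\rho_\varepsilon]H\to0$ uniformly for continuous $H$ with continuous $X_jH$. It therefore suffices to prove the estimate for smooth $H_\varepsilon$, with constants controlled only by the modulus of continuity of $XH_\varepsilon$, which is uniform in $\varepsilon$, and then pass to the limit.

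For smooth $G$, join $q$ to $x$ by an $X$-subunit path $\gamma$ of length $T\le 2d(q,x)$ with controls $h$. Then
\[
G(x)-G(q)=\int_0^T\langle XG(\gamma),h\rangle .
\]
By \eqref{eq_pro2},
\[
\pi_q(x)=\int_0^T\sum_j h_j\,X_j(\pi_q)(\gamma)=\int_0^T h+O\big(T\,\omega_\pi(T)\big).
\]
Subtracting, the error is bounded by
\[
T\sup_{\gamma}|XG-XG(q)|+C\,T\,\omega_\pi(T),
\]
which is $o(d(x,q))$ uniformly.

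The step I expect to be the main obstacle is the Friedrichs commutator argument with uniform, rather than $L^p$, convergence. An alternative I would keep in reserve is to cite the known equivalence for $C^1_X$ functions in CC spaces (e.g.\ \cite{dv}); that result is phrased via exponential coordinates, and the estimate above bridges it to $\pi_q$.
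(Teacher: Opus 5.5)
Your proposal is correct. For the direct implication it is essentially the paper's argument. The paper takes $\gamma_j(t)=F_q(te_j)$, which equals $\exp(tX_j)(q)$ because $Y_j=X_j$, so $\pi_q(\gamma_j(t))=te_j$ holds exactly; your detour through \eqref{eq_pro2} is harmless but unnecessary. It then obtains $\frac{d}{ds}H(\gamma_j(s))=b_j(\gamma_j(s))$ and cites \cite[Theorem 1.2]{venturini} to turn this Lie derivative into a distributional one, whereas you prove that step by hand with the flow and the adjoint $X_j^*=-X_j-\operatorname{div}X_j$.

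The converse is where you genuinely differ. The paper quotes \cite[Proposition 2.13]{dv}, a first-order expansion $\sup_{\xi\in\wb_q(0,r)}|H(F_q(\xi))-H(q)-\langle XH(q),\xi_H\rangle|=o(r)$ in exponential coordinates. It then substitutes $\xi=F_q^{-1}(x)$ and $r=2d(q,x)$, using $d(q,x)=\wdd_q(0,\xi)$. You instead prove the expansion from scratch: you mollify, integrate along an $X$-subunit path of length $T\le 2d(q,x)$, and control $\pi_q$ along the path by \eqref{eq_pro2}. The path lies in $B(q,2d(q,x))$, hence in a compact subset of $V$ for $x$ close to $q$.

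The step you flagged does hold. Write $X_j=a\cdot\nabla$. For merely continuous $H$, the commutator $X_j(H*\rho_\ve)-(X_jH)*\rho_\ve$ tends to $0$ locally uniformly: its kernel $(a(x)-a(y))\nabla\rho_\ve(x-y)$ has $L^1$ norm bounded in $\ve$, and its first-order Taylor part converges to $-H\operatorname{div}X_j$, cancelling the remaining term. You also do not need uniform equicontinuity of $XH_\ve$, since the path depends only on $q,x$ and you may let $\ve\to0$ in the bound for fixed $q,x$.

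The paper's route buys brevity at the cost of two external theorems. Yours is self-contained given Lemma \ref{lem_pi} and works with $d$ and subunit paths instead of $\wdd_q$. It also gives the converse estimate locally uniformly in $q$, which is more than the statement requires.
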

\begin{proof}
    Assume first that \eqref{eq_conddiff} holds. Fix $q \in V$ and $1 \leq j \leq m$. For $|t|$ sufficiently small set 
    \[
    \gamma_j(t) \ceq F_q(te_j).
    \]
    The curve $\gamma_j$ is the integral curve of $X_j$ such that $\gamma_j(0)=q$. Moreover, by the definition of $\pi_q$, 
\begin{equation}\label{eq_cd1}
    \pi_q(\gamma_j(t))=te_j.
\end{equation}
Since $\gamma_j|_{[0,t]}$ (for $t$ sufficiently small) is an $X$-subunit curve we obtain
\begin{equation}\label{eq_cd2}
    d(\gamma_j(t),q) \leq |t|.
\end{equation}
Applying \eqref{eq_conddiff} at $q$ and using \eqref{eq_cd1} and \eqref{eq_cd2}, we obtain
\[
\frac{H(\gamma_j(t))-H(q)}{t}=b_j(q)  +\frac{o(d(\gamma_j(t),q))}{t}\xrightarrow{t \to 0} b_j(q)
\]
The same argument can be applied at $\gamma_j(v)$ for $0 \leq v \leq t$. In fact, for $h$ small enough,
\[
F_{\gamma_j(s)}(he_j)=\gamma_j(s+h).
\]
Therefore
\[
\frac{d}{ds}H(\gamma_j(s))=b_j(\gamma_j(s)).
\]
Since $b_j \circ \gamma_j$ is continuous, by the fundamental theorem of calculus we obtain
\[
H(\gamma_j(t))-H(q)=\int_0^t b_j(\gamma_j(s))ds.
\]
Thus $b_j$ is the Lie derivative of $H$ with respect to $X_j$ (see for instance \cite{venturini} and the references therein for the relevant definitions). Hence, by \cite[Theorem 1.2]{venturini} we have that $X_jH=b_j$ in the sense of distributions. Since this holds for every $1 \leq j \leq m$ and $b$ is continuous we obtain that $H \in C^1_X(V)$ and $XH=b$ (in the sense of distributions). Conversely, let $H \in C^1_X(V)$ and fix $q \in V$. By \cite[Proposition 2.13]{dv} we have that 
\[
\sup_{\xi \in \wb_q(0,r)}\frac{|H(F_q(\xi))-H(q)-\langle XH(q),\xi_H \rangle|}{r}\xrightarrow{r \to 0} 0.
\]
For $x$ sufficiently close to $q$, put $\xi=F^{-1}_q(x)$ and $r=2d(q,x)$. Locally $d(q,x)=\widetilde{d}_q(0,\xi)$, so $\xi \in \wb_q(0,r)$ and the preceding estimate gives
\[
H(x)-H(q)-\langle XH(q),\pi_q(x) \rangle=o(d(x,q)) \qquad \text{as }x\to q,
\]
concluding the proof.
\end{proof}

\begin{teo}\label{teo_whitney}
Let $U$ be a good set and $E \subset U$ be a compact set. Let $f:E \to \R$ and $a:E \to \R^m$ be continuous. Assume that 
    \begin{equation}\label{eq_ipwhit}
\frac{|f(q)-f(p)-\langle a(p),\pi_p(q) \rangle|}{d(p,q)} \longrightarrow 0
    \end{equation}
    uniformly for $p,q \in E$ as $d(p,q) \to 0$. Then there exist an open set $V$ such that $E \subset V \subset U$, and a function $F \in C^1_X(V)$ such that 
    \[
    F \equiv f \text{ on }E \qquad \text{and}\qquad XF=a \text{ on }E.
    \]
\end{teo}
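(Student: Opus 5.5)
We follow the classical Whitney scheme of \cite[Section 6.5]{evansgariepy}, as adapted to Carnot groups in \cite[Theorem 5.2]{fssc03}, and use Theorem \ref{teo_diff} to certify the regularity of the extension.

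\textbf{Construction.} Choose an open set $V$ with $E\subset V\subset\subset U$, for instance a small metric neighbourhood of $E$, so that all relevant points lie in $U$ and $\pi$ is defined on them. On $V\setminus E$ take a Whitney-type decomposition adapted to $d$, namely a countable family of balls $B_k=B(x_k,r_k)$ with $r_k$ comparable to $\operatorname{dist}_d(x_k,E)$ and bounded overlap. Bounded overlap follows from the doubling property of Lebesgue measure on CC balls, obtainable from Proposition \ref{pro_propr}(ii). Attach to it a partition of unity $\{\vp_k\}$ on $V\setminus E$ with $\sum_k\vp_k=1$, $\operatorname{supp}\vp_k\subset B(x_k,2r_k)$ and $|X\vp_k|\le C/r_k$. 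For the last bound we build $\vp_k$ by composing a smooth cutoff with $N_{x_k}(\cdot)/r_k$ and normalising; the gradient estimate then comes from Proposition \ref{pro_propr}(i), $|X_jN_p|\le C_N$. For each $k$ pick $s_k\in E$ with $d(x_k,s_k)=\operatorname{dist}_d(x_k,E)$. Set $F=f$ on $E$ and
\[
F(x)\ceq\sum_k\vp_k(x)\bigl(f(s_k)+\langle a(s_k),\pi_{s_k}(x)\rangle\bigr),\qquad x\in V\setminus E .
\]
Each term is smooth in $x$. By \eqref{eq_pro2}, $X_j\pi_{s_k}(x)=e_j+O(\omega_\pi(d(s_k,x)))$.

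\textbf{Estimates.} Fix $p\in E$ and $x\in V\setminus E$ near $p$. Write $\rho\ceq\operatorname{dist}(x,E)\le d(x,p)$. For $x\in\operatorname{supp}\vp_k$ we have $d(x,s_k)\le C\rho$ and $d(s_k,p)\le Cd(x,p)$. We compare $P_{s_k}(x)\ceq f(s_k)+\langle a(s_k),\pi_{s_k}(x)\rangle$ with $P_p(x)$. By \eqref{eq_pro3},
\[
\pi_{s_k}(x)=\pi_p(x)-\pi_p(s_k)+O\bigl(t\,\omega_\pi(t)\bigr),\qquad t\le Cd(x,p).
\]
Combining this with the hypothesis \eqref{eq_ipwhit} for the pair $(p,s_k)$ and with the uniform continuity of $a$ on $E$ gives
\[
|P_{s_k}(x)-P_p(x)|\le\omega(d(x,p))\,d(x,p)
\]
for a modulus $\omega$ independent of $p$ and $k$. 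This yields $F(x)-P_p(x)=o(d(x,p))$, which together with \eqref{eq_ipwhit} on $E$ is condition \eqref{eq_conddiff} at $p$ with $b(p)=a(p)$.

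Off $E$, using $\sum_k X_j\vp_k=0$, we get
\[
X_jF(x)=\sum_k\vp_k(x)\langle a(s_k),X_j\pi_{s_k}(x)\rangle+\sum_k X_j\vp_k(x)\bigl(P_{s_k}(x)-P_p(x)\bigr).
\]
The second sum is at most $C\rho^{-1}\,\omega(Cd(x,p))\,d(x,p)$. Choosing $p$ to be a nearest point of $E$ to $x$ makes $d(x,p)=\rho$, so the sum is $o(1)$. The first sum tends to $a(p)$ as $x\to p$, by continuity of $a$ and \eqref{eq_pro2}. Hence $XF$, defined classically on $V\setminus E$ and by $a$ on $E$, is continuous on $V$. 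Continuity of $F$ on $V$ follows from the same bound.

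\textbf{Conclusion and main obstacle.} On $V\setminus E$, $F$ is smooth, so \eqref{eq_conddiff} holds there by the converse part of Theorem \ref{teo_diff}. At points of $E$ it holds by the estimate above. Theorem \ref{teo_diff} then gives $F\in C^1_X(V)$ with $XF=a$ on $E$. The main obstacle is the step that handles the sum $\sum_k X_j\vp_k(P_{s_k}-P_p)$. It requires three uniform inputs at once: a Whitney covering with $|X\vp_k|\lesssim r_k^{-1}$ in the CC metric; the three-point estimate \eqref{eq_pro3}, which replaces the group law; and the fact that every relevant point lies inside the good set, so that all $\pi$'s are defined. Shrinking $V$ to a thin neighbourhood of $E$ should guarantee the last point.
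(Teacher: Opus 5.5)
Your proposal is correct and follows essentially the paper's proof. Both use Vitali/Whitney balls with radius comparable to $d(\cdot,E)$ and a partition of unity built from $\chi(N_p(\cdot)/r)$, with $|Xv_p|\lesssim 1/r$ coming from Proposition \ref{pro_propr}(i). Both use the same extension formula, compare the affine pieces via \eqref{eq_pro3} and \eqref{eq_ipwhit}, bound $XF$ at a nearest point using $\sum_k X_j v_k=0$, and conclude with both directions of Theorem \ref{teo_diff}. The only adjustment is a constant: $N_{x_k}$ is merely comparable to $d(x_k,\cdot)$, so a cutoff in $N_{x_k}$ that equals $1$ on the covering ball is supported only in a ball enlarged by the factor $\theta=c_2/c_1$, not in $B(x_k,2r_k)$. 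This is why the paper takes $r(x)=\delta(x)/(20\theta)$, and nothing else in your argument depends on it.
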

\begin{proof}
\emph{Step 1: Define a suitable family of balls.}\\
    Define 
    \[
    \theta \ceq \frac{c_2}{c_1} \geq 1,
    \]
    where $c_2$ and $c_1$ are as in Theorem \ref{teo_distequiv}. Choose $\rho>0$ so small that $\{ x \in U: d(x,E)<8\rho \} \subset \subset U$. Define
    \[
    O \ceq \{ x \in U: 0<d(x,E)<2\rho\} \quad \text{ and }\quad
    V \ceq \{x \in U: d(x,E)<\rho \}.
    \]
    For $x \in O$ write $\delta(x) \ceq d(x,E)$ and 
    \[
    r(x) \ceq \frac{\delta(x)}{20 \theta}.
    \]
    By the Vitali covering theorem we can find a countable set $S \subset O$ such that the balls $\{B(p,r(p))\}_{p \in S}$ are pairwise disjoint and
    \[
    O \subset \bigcup_{p \in S} B(p,5r(p)).
    \]
For $x \in O$ we define the set
\[
S_x \ceq \{ p \in S: B(x, 10\theta r(x)) \cap B(p, 10\theta r(p)) \neq \emptyset \}.
\]
We observe in passing that if $p \in S_x$, then $d(p,x) \leq 10 \theta (r(p)+r(x))$ and $\delta(x) \leq d(x,p)+\delta(p)$. Consequently
\[
20\theta r(x) \leq 10 \theta (r(p)+r(x))+20\theta r (p), 
\]
hence $r(x) \leq 3r(p)$. Changing the role of $p$ and $x$ we have
\begin{equation}\label{eq_whitney1}
\frac{1}{3} \leq \frac{r(p)}{r(x)} \leq 3 \qquad \text{ for every } p \in S_x.
\end{equation}
Moreover, 
\begin{equation}\label{eq_whitney2}
B(p,r(p)) \subset B(x, (40\theta+3)r(x)),
\end{equation}
and, by the local Ahlfors $Q$-regularity of CC spaces, there exist constants $C_1,C_2>0$ such that 
\begin{equation}\label{eq_whitney3}
C_1 r^Q \leq \leb^n(B(y,r)) \leq C_2r^Q
\end{equation}
for every ball $B(y,r) \subset U$. By \eqref{eq_whitney1}, \eqref{eq_whitney2} and \eqref{eq_whitney3} and recalling the fact that the balls $\{B(p,r(p))\}_{p \in S}$ are pairwise disjoint one gets that 
\begin{equation}\label{eq_whitney5}
\#S_x \leq \frac{C_2}{C_1}(120 \theta +9)^Q.
\end{equation}
For the sake of brevity we define $L \ceq \frac{C_2}{C_1}(120 \theta +9)^Q$. 

\emph{Step 2: Define an adapted partition of the unity and the extension function.}\\
Now let $\chi \in C^\infty ([0,+\infty))$ be a non-increasing function such that $0 \leq \chi \leq 1$, with 
\begin{align*}
    &\chi(t)=1 \qquad \text{for }t \leq \frac{1}{c_1},\\
    &\chi(t)=0 \qquad \text{for }t \geq \frac{3}{2c_1},
\end{align*}
and, for $p \in S$ and $x \in O$ we define
\[
g_p(x) \ceq   \chi \left(   \frac{N_p(x)}{5r(p)}     \right).
\]
The following properties of $g_p$ are easy to check:
\begin{equation}\label{eq_whitney4}
g_p  \in C^\infty(O), \quad 0\leq g_p\leq1,
\quad
g_p\equiv1\ \text{on }B(p,5r(p)),\qquad\operatorname{spt}(g_p)\se B(p,10\theta r(p)).
\end{equation}
By Proposition \ref{pro_propr}, for every $j=1,\dots,m$ and for every $x \in O$,
\[
|X_jg_p(x)| \leq \frac{\nl \chi' \nr_\infty C_N}{5r(p)} \eqqcolon \frac{C_3}{r(p)}.
\]
Moreover, if $g_p(x) \neq 0$, then $p \in S_x$, and combining the above inequality with \eqref{eq_whitney1} we obtain
\begin{equation}\label{eq_whitney6}
|X_jg_p(x)| \leq \frac{3C_3}{r(x)} \eqqcolon \frac{C_4}{r(x)}.
\end{equation}
Now we define $\sigma: O \to \R$ as 
\[
\sigma (x) \ceq \sum_{p \in S}g_p(x).
\]
By \eqref{eq_whitney5} and \eqref{eq_whitney4} the above sum is locally finite, $\sigma \in C^\infty(O)$ and $\sigma \geq 1$ on $O$. Moreover, because of \eqref{eq_whitney6}, 
\[
|X_j \sigma(x)| \leq \frac{LC_4}{r(x)} \eqqcolon \frac{C_5}{r(x)},
\]
for every $j=1,\dots,m$ and for every $x \in O$. Finally we define a partition of the unity subordinate to the family of balls $\{  B(p,10 \theta r(p)) \}_{p \in S}$ as follows
\[
v_p(x) \ceq \frac{g_p(x)}{\sigma(x)}.
\]
In fact, for every $x \in O$, it is clear that 
\begin{align}\label{eq_whitney9}
    &\sum_{p \in S}v_p(x) = 1,\\
    & \sum_{p \in S}X_j v_p(x) =0  \text{ for every }j=1,\dots,m,\notag\\
    &|X_jv_p(x)|\leq \frac{C_4+C_5}{r(x)} \eqqcolon \frac{C_6}{r(x)} \text{ for every }j=1,\dots,m,\notag\\
      &\sum_{p \in S}|X_jv_p(x)|\leq \frac{LC_6}{r(x)} \eqqcolon \frac{C_7}{r(x)}.\notag
    \end{align}
The extended function $F$ is defined, for $x\in V$,
\[
F(x)
\ceq
\begin{cases}
f(x),&x\in E,\\[1mm]
\displaystyle
\sum_{p\in S}v_p(x)\bigl[f(\widehat p)+\langle a(\widehat p),\pi_{\widehat p}(x)\rangle\bigr],
&x\in V\setminus E,
\end{cases}
\]
where, for every $p \in S$, $\hat p \in E$ is one of the points such that $d(p,\hat p)=\delta(p)$. The sum is locally finite, and hence $F$ is smooth on $V\setminus E$. To conclude the proof we have to prove that $F \in C^1_X(V)$ and that $XF=a$ on $E$. 

\emph{Step 3: We prove that}
\begin{equation}\label{eq_claimw1}
F(x)-F(q)-\langle a(q),\pi_q(x) \rangle=o(d(x,q)) \qquad  \text{as} \quad x \to q \in E.
\end{equation}
For the sake of brevity, we use the following notation: for $p \in E$ and $x \in U$ we define
\[
A_p(x) \ceq f(p)+\langle a(p), \pi_p(x) \rangle.
\]
Moreover, we define $A_* \ceq \sup_E |a|$ and the moduli of continuity
\begin{align*}
\omega_f(t)&\ceq \sup_{p,q \in E,0<d(p,q) \leq t} \frac{|f(q)-f(p)-\langle a(p),\pi_p(q)\rangle|}{d(p,q)},\\
\omega_a(t) &\ceq  \sup_{p,q \in E,d(p,q) \leq t} |a(p)-a(q)|,\\
\Omega(t) &\ceq \omega_f(t)+\omega_a(t)+\omega_\pi(t),
\end{align*}
where $\omega_\pi$ is defined as in Lemma \ref{lem_pi}. Clearly, $\Omega(t) \xrightarrow{t \to 0}0$. For $p,q \in E$ and $z \in U$ we have
\[
A_p(z)-A_q(z) =[f(p)-f(q)-\langle a(q),\pi_q(p)\rangle]+\langle a(p)-a(q),\pi_p(z) \rangle+\langle a(q), \pi_p(z)-\pi_q(z)+\pi_q(p)\rangle,
\]
and, consequently,
\begin{equation}\label{eq_whitney7}
|A_p(z)-A_q(z)| \leq \omega_f(t)t+C_\pi \omega_a(t)t+A_*\omega_\pi(t)t \leq (1+C_\pi+A_*)\Omega(t)t \eqqcolon C_A \Omega(t)t,
\end{equation}
where $C_\pi$ is as in Lemma \ref{lem_pi} and $t \ceq d(p,q)+d(p,z)+d(q,z)$. We also observe in passing that 
\begin{equation}\label{eq_whitney8}
    |X_jA_p(z)-a_j(p)| \leq A_* \omega_\pi(d(p,z)).
\end{equation}
Fix $q \in E$. If $x \in E$, then \eqref{eq_claimw1} is exactly the Whitney condition \eqref{eq_ipwhit}. Let $x \in V \setminus E$. If $g_p(x) \neq 0$, then $d(p,x) \leq 10\theta r(p)=\delta(p)/2$, hence
\[
\delta(p) \leq d(p,x)+\delta(x)\leq \frac{1}{2}\delta(p)+\delta(x),
\]
therefore $\delta(p) \leq 2\delta(x) \leq 2d(x,q)$ and, consequently, $d(p,x) \leq d(x,q)$. Therefore
\[
d(\hat p,x) \leq \delta(p)+d(p,x) \leq 3d(x,q), \qquad d(\hat p,q) \leq \delta(p)+d(p,x)+d(x,q) \leq 4d(x,q), 
\]
the latter implying
\[
d(\hat p,q)+d(\hat p,x) +d(q,x) \leq 4d(x,q)+3d(x,q)+d(x,q)=8d(x,q)
\]
and, by \eqref{eq_whitney7},
\[
|A_{\hat p}(x)-A_q(x)| \leq 8C_A \Omega(8d(x,q))d(x,q)=o(d(x,q)).
\]
By the above estimate and the fact that $\sum_p v_p \equiv 1$, we obtain
\begin{equation}\label{eq_whitney10}
|F(x)-A_q(x)| =o(d(x,q)),
\end{equation}
which proves \eqref{eq_claimw1} and, in particular, the continuity of $F$ at every point of $E$.

\emph{Step 4: Continuity of the candidate horizontal gradient and conclusion.}\\
 For $x \in V \setminus E$ choose $\hat x \in E$ such that $d(x,\hat x)=\delta(x)$. By \eqref{eq_whitney9},
\begin{equation}\label{eq_whitney11}
X_jF(x)-a_j(\hat x)=\sum_{p \in S}X_jv_p(x)[A_{\hat p}(x)-A_{\hat x}(x)]+\sum_{p \in S}v_p(x)[X_jA_{\hat p}(x)-a_j(\hat x)].
\end{equation}
In the same fashion as above we obtain 
\[
d(\hat p,\hat x)+d(\hat p,x)+d(\hat x,x) \leq 8 \delta(x),
\]
the latter implying that 
\[
|A_{\hat p}(x)-A_{\hat x}(x)| \leq 8C_A \Omega(8\delta(x))\delta(x).
\]
By \eqref{eq_whitney9} and the fact that $r(x)=\delta(x)/(20\theta)$ the first sum on the right hand side of \eqref{eq_whitney11} is bounded by $160\theta C_AC_7\Omega(8\delta(x))$. For the second sum \eqref{eq_whitney8} and the continuity of $a$ give
\[
|X_j A_{\hat p}(x)-a_j(\hat x)|\leq A_* \omega_\pi (3\delta(x))+\omega_a(4\delta(x)) \leq (A_*+1)\Omega(8\delta(x)),
\]
the latter implying that 
\[
|X_jF(x)-a_j(\hat x)| \leq (160\theta C_AC_7+A_*+1)\Omega(8\delta(x)).
\]
If $x \to q \in E$, then $\delta(x) \to 0$ and $d(\hat x,q) \leq \delta(x)+d(x,q) \to 0$, hence $a_j(\hat x) \to a_j(q)$ and, consequently,
\begin{equation}\label{eq_whitney13}
X_jF(x) \to a_j(q) \qquad \text{as }x \to q, \quad x \in V \setminus E.
\end{equation}
Define $b: V \to \R^m$ as follows
\begin{align*}
b(x) \ceq \begin{cases}
    XF(x) &x \in V\setminus E,\\
    a(x) &x \in E.
\end{cases}
\end{align*}
The function $b$ is continuous by \eqref{eq_whitney13}. At every $q \in E$, \eqref{eq_conddiff} holds with $b(q)=a(q)$ by \emph{Step 3}. If $q \in V \setminus E$, then $F$ is smooth in a neighbourhood of $q$, hence also $C^1_X$. The converse part of Theorem \ref{teo_diff} gives \eqref{eq_conddiff} with $b=XF$. Therefore the hypotheses of the direct part of Theorem \ref{teo_diff} hold on all of $V$. We conclude that $F \in C^1_X(V)$ and $XF=b$ (in the sense of distributions). In particular $F \equiv f$ and $XF \equiv a$ on $E$, which concludes the proof.

\end{proof}

\section{Intrinsic rectifiability of the jump set}\label{sec_main}

In this section we prove the countable $X$-rectifiability of the intrinsic jump set. Before doing so, we need some preliminary results.

In the next lemma, we show that if two sufficiently close points violate a suitable cone condition, then one can construct a set $\mathcal A$ that will be used in the contradiction arguments of Lemmata \ref{lem_decomp} and \ref{lem_condwhit}.

\begin{lem}\label{lem_Aset}
Let $U$ be a good set, $\theta>0$ and $\nu_p,\nu_q \in \mathbb{S}^{m-1}$. Then there exist constants
\[
r_0>0,\qquad R_0>0,\qquad c_0>0,
\]
depending only on $U$ and $\theta$, with the following property. Whenever $p,q\in U$,  $\rho\ceq d(p,q)\in(0,r_0)$ and
\begin{equation}\label{eq_1lem1}
\langle\nu_p,\pi_p(q)\rangle\geq\theta\rho   \qquad   [\text{resp. }\langle\nu_p,\pi_p(q)\rangle\leq-\theta\rho]
\end{equation}
holds, then there exists an open set $\mathcal A$ such that
\begin{align}
\label{eq_1lem2}
\mathcal A
&\se
F_p\bigl(\delta_\rho(\A(R_0))\bigr)
\cap
F_q\bigl(\delta_\rho(\A(R_0))\bigr),\\
\label{eq_1lem3}
\leb^n(\mathcal A)
&\geq c_0\rho^Q,                   \\
\label{eq_1lem4}
\langle\nu_p,\pi_p(y)\rangle
&>0    \qquad   [\text{resp. }\langle\nu_p,\pi_p(y)\rangle<0]
\qquad\text{for every }y\in\mathcal A,       \\
\label{eq_1lem5} 
\langle\nu_q,\pi_q(y)\rangle
&<0  \qquad   [\text{resp. }\langle\nu_q,\pi_q(y)\rangle>0]
\qquad\text{for every }y\in\mathcal A.
\end{align}
\end{lem}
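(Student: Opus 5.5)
The plan is to work in exponential coordinates centred at $p$ and rescaled by $\rho$. In these coordinates $q$ corresponds to a point $a$ lying in a fixed pseudo-ball. We build $\mathcal A$ as the image of a small Euclidean ball around a point $b$ that is obtained from $a$ by moving its horizontal part backwards along $\nu_q$. Lemma \ref{lem_tr} then converts the signs of the linear functionals at $b$ into the signs of $\langle\nu_q,\pi_q(\cdot)\rangle$ on $\mathcal A$. The point $b$ depends on $\nu_q$, but all constants depend only on $U$ and $\theta$.

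\emph{Step 1 (the point $b$ and its ball).} Write $q=F_p(\delta_\rho a)$ with $a\ceq\delta_{1/\rho}F_p^{-1}(q)$. By Theorem \ref{teo_distequiv}, $\|a\|_*=N_p(q)/\rho\le 1/c_1$, and by definition $\pi_p(q)=\rho a_H$, so hypothesis \eqref{eq_1lem1} reads $\langle\nu_p,a_H\rangle\ge\theta$. Set $\kappa\ceq\theta/2$ and define $b\in\R^n$ by
\[
b_H\ceq a_H-\kappa\nu_q,\qquad b_i\ceq a_i\quad (i>m).
\]
Then $\langle\nu_p,b_H\rangle\ge\theta-\kappa\langle\nu_p,\nu_q\rangle\ge\theta/2$ and $\langle\nu_q,b_H-a_H\rangle=-\theta/2$. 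In the bracketed case we instead use $b_H\ceq a_H+\kappa\nu_q$, and everything below is symmetric. Let $D\ceq B_{\mathrm{eucl}}(b,\varepsilon)$ with $\varepsilon\ceq\theta/8$. Since $a$ ranges in a bounded set and $|b-a|=\kappa$, there is $R=R(\theta,U)$ with $a$ and all of $D$ contained in $\A(R)$. Also, for every $z\in D$ we have $\langle\nu_p,z_H\rangle\ge\theta/2-\varepsilon>0$ and $\langle\nu_q,z_H-a_H\rangle\le-\theta/2+\varepsilon=-3\theta/8$.

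\emph{Step 2 (definition of $\mathcal A$ and its properties).} Put $\mathcal A\ceq F_p(\delta_\rho D)$, which is open. Each property follows from an earlier result:
\begin{itemize}
\item \eqref{eq_1lem2} follows by taking $R_0\ge\max\{R,R''\}$, where $R''$ is given by Lemma \ref{lem_tr} for this $R$, and applying \eqref{eq_est2} with $s=F_p(\delta_\rho z)$.
\item \eqref{eq_1lem4} holds because $\pi_p(F_p(\delta_\rho z))=\rho z_H$.
\item \eqref{eq_1lem5} follows from \eqref{eq_est1}: $\bigl|\pi_q(y)/\rho-(z_H-a_H)\bigr|\le\omega_R(\rho)$. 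Choosing $r_0\le R'$ so small that $\omega_R(\rho)<\theta/4$ gives $\langle\nu_q,\pi_q(y)\rangle<\rho(-3\theta/8+\theta/4)<0$.
\item \eqref{eq_1lem3} follows from \eqref{eq_jac2} and \eqref{eq_jac1}: $\leb^n(\mathcal A)\ge C_J^{-1}\rho^Q\leb^n(D)\ge c_0\rho^Q$, with $c_0=C_J^{-1}\omega_n\varepsilon^n$, after also taking $r_0\le r_J$.
\end{itemize}
We further shrink $r_0$ so that everything stays inside the region where the coordinates are defined. This is possible because $d(p,y)\le C\rho$ by Theorem \ref{teo_distequiv}.

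\emph{Main obstacle.} The only delicate point is uniformity. The constants $R$, $R_0$, $\varepsilon$, $c_0$ and $r_0$ must not depend on $p$, $q$, $\nu_p$, $\nu_q$, even though $b$ depends on $\nu_q$. This works because $|\nu_q|=1$ keeps $b$ in a fixed pseudo-ball $\A(R)$, and Lemma \ref{lem_tr} together with Proposition \ref{pro_propr} are uniform over $a,b\in\A(R)$ and $p\in U$.
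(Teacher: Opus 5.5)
Your proposal is correct and follows essentially the same argument as the paper. The paper also rescales $q$ to $z_q=\delta_{1/\rho}F_p^{-1}(q)\in\overline{\A(c_1^{-1})}$, shifts the horizontal part by $-\frac{\theta}{2}(\nu_q,0)$, takes $\mathcal A=F_p(\delta_\rho B)$ for a small Euclidean ball $B$ around the shifted point, and reads off \eqref{eq_1lem2}--\eqref{eq_1lem5} from Lemma \ref{lem_tr} and Proposition \ref{pro_propr}. The only differences are in the constants (your radius $\theta/8$ and margin $\omega_R(\rho)<\theta/4$ versus the paper's $\theta/32$ and $\theta/8$), and both choices close the sign estimates.
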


\begin{proof}
Let $c_1>0$ be the constant appearing in Theorem \ref{teo_distequiv}. For $\nu\in\mathbb S^{m-1}$, we denote by $(\nu,0)\in\R^n$ the vector whose first $m$ components coincide with $\nu$ and whose remaining components vanish. Consider the compact set
\[
\mathcal C_\theta
\ceq
\left\{
 a-\frac{\theta}{2}(\nu,0)+e:
 \nl a\nr_*\leq c_1^{-1},\ 
 \nu\in\mathbb S^{m-1},\ 
 |e|\leq\frac{\theta}{32}
\right\}
\cup
\left\{a\in\R^n:\nl a\nr_*\leq c_1^{-1}\right\}.
\]
Choose $R_1>0$ so large that
\begin{equation}\label{eq_1lem_R1}
\mathcal C_\theta\se\A(R_1).
\end{equation}
Apply Lemma \ref{lem_tr} with $R=R_1$ and let $R',R''>0$ and $\omega_{R_1}$ be the corresponding constants and modulus. Choose $r_1>0$ such that
\begin{equation}\label{eq_1lem10}
\omega_{R_1}(r)\leq\frac{\theta}{8}
\qquad\text{for every }0<r<r_1.
\end{equation}
Set
\[
R_0 \ceq 1+\max\{R_1,R''\}.
\]
Apply Proposition \ref{pro_propr}(ii) with $R=R_0$, and let $C_J>0$ and $r_J>0$ be the corresponding constants. Define
\[
r_0
\ceq
\frac12\min\{R',r_1,r_J,1\}.
\]
Let $p,q\in U$ and let
\[
\rho\ceq d(p,q)\in(0,r_0).
\]
Since $U$ is a good set, the point $F_p^{-1}(q)$ is well defined. We may therefore set
\[
z_q\ceq\delta_{1/\rho}\bigl(F_p^{-1}(q)\bigr).
\]
By Theorem \ref{teo_distequiv} and the homogeneity of the pseudo-norm,
\[
\rho=d(p,q)\geq c_1\nl F_p^{-1}(q)\nr_*=c_1\nl\delta_\rho z_q\nr_*=c_1\rho\nl z_q\nr_*,
\]
the latter implying
\begin{equation}\label{eq_1lem_zq_bound}
\nl z_q\nr_*\leq c_1^{-1}.
\end{equation}
Since the first $m$ coordinates have weight one,
\[
\pi_p(q)
=\bigl(F_p^{-1}(q)\bigr)_H
=(\delta_\rho z_q)_H
=\rho(z_q)_H.
\]
Thus \eqref{eq_1lem1} gives
\begin{equation}\label{eq_1lem6}
\langle\nu_p,(z_q)_H\rangle\geq\theta.
\end{equation}
Define
\[
\bar z\ceq z_q-\frac{\theta}{2}(\nu_q,0) \quad \text{ and }\quad
B_{p,q}\ceq B_E(\bar z,\theta/32),
\]
where with $B_E(\cdot,\cdot)$ we denote the usual Euclidean open ball. If $z\in B_{p,q}$, then, writing $e\ceq z-\bar z$, we have $|e_H|\leq|e|<\theta/32$ and
\[
z_H=(z_q)_H-\frac{\theta}{2}\nu_q+e_H.
\]
Consequently, by \eqref{eq_1lem6},
\begin{align}
\label{eq_1lem7}
\langle\nu_p,z_H\rangle
&>
\langle\nu_p,(z_q)_H\rangle
-\frac{\theta}{2}-\frac{\theta}{32}
\geq\frac{15}{32}\theta
>\frac{\theta}{4},\\
\label{eq_1lem9}
    \langle \nu_q, z_H-(z_q)_H \rangle& =  -\frac{\theta}{2}+\langle \nu_q,e_H   \rangle \leq -\frac{\theta}{2}+\frac{\theta}{32} <-\frac{\theta}{4},
\end{align}
By \eqref{eq_1lem_zq_bound}, the definition of $B_{p,q}$ and
\eqref{eq_1lem_R1},
\[
z_q\in\A(R_1),
\qquad
B_{p,q}\se\A(R_1).
\]
Define
\[
\mathcal A\ceq F_p(\delta_\rho B_{p,q}).
\]
Since
\[
B_{p,q}\subset\mathscr A(R_1)\subset\mathscr A(R_0)
\]
and $\rho<r_J$, the set $\mathcal A$ is well defined. Since $F_p$ is a diffeomorphism onto its image, $\mathcal A$ is open, and clearly
\[
\mathcal A\subset F_p\bigl(\delta_\rho\mathscr A(R_0)\bigr).
\]
Let $y\in\mathcal A$. Then $y=F_p(\delta_\rho z)$ for some $z\in B_{p,q}\se\A(R_1)$, while
\[
q=F_p(\delta_\rho z_q), \qquad z_q\in\A(R_1).
\]
Since $p\in U$, $\rho<R'$ and $z_q,z\in\mathscr A(R_1)$, Lemma \ref{lem_tr}, applied with $a=z_q$ and $b=z$, gives
\[
y\in F_q\bigl(\delta_\rho\mathscr A(R'')\bigr) \subset F_q\bigl(\delta_\rho\mathscr A(R_0)\bigr).
\]
proving \eqref{eq_1lem2}. Furthermore,
\[
\pi_p(y)=(\delta_\rho z)_H=\rho z_H,
\]
and hence, by \eqref{eq_1lem7},
\[
\langle\nu_p,\pi_p(y)\rangle
=\rho\langle\nu_p,z_H\rangle>0,
\]
proving \eqref{eq_1lem4}. Again by Lemma \ref{lem_tr} and \eqref{eq_1lem10}, 
\[
\left|
\frac{\pi_q(y)}{\rho}-\bigl(z_H-(z_q)_H\bigr)
\right|
\leq\omega_{R_1}(\rho)
\leq\frac{\theta}{8}.
\]
Therefore, using \eqref{eq_1lem9},
\[
\frac1\rho\langle\nu_q,\pi_q(y)\rangle
\leq
\langle\nu_q,z_H-(z_q)_H\rangle
+
\left|
\frac{\pi_q(y)}{\rho}-\bigl(z_H-(z_q)_H\bigr)
\right|<
-\frac{15}{32}\theta+\frac{\theta}{8}
<0,
\]
proving \eqref{eq_1lem5}. Finally, by Proposition \ref{pro_propr},
\[
\leb^n(\mathcal A)=\leb^n(F_p(\delta_\rho B_{p,q}))=\rho^Q \int_{B_{p,q}} |\det JF_p(\delta_\rho z)|dz \geq C_J^{-1}\leb^n(B_{p,q})\rho^Q.
\]
Since $\leb^n(B_{p,q})=\leb^n(B_E(0,\theta/32))$, \eqref{eq_1lem3} holds with $c_0 \ceq C_J^{-1}\leb^n(B_E(0,\theta/32))$. The case with the reverse signs is analogous.
\end{proof}

The next lemma proves that if a certain set satisfies a cone condition \eqref{eq_prolip}, then it can be covered by $X$-Lipschitz hypersurfaces.

\begin{lem}\label{lem_lip}
Let $U$ be a good set, $E\subset \subset  U$, $\nu_0\in\mathbb S^{m-1}$ and $r_E>0$. If for every $p,q\in E$ such that $0<d(p,q)<r_E$, one has 
\begin{equation}\label{eq_prolip}
\left|\langle\nu_0,\pi_p(q)\rangle\right| \leq \frac{1}{16\sqrt m}d(p,q),
\end{equation}
then every point of $E$ has a neighbourhood in which $E$ is contained in an $X$-Lipschitz hypersurface. Consequently, $\hau^{Q-1}(E)<+\infty$.
\end{lem}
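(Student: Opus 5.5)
The plan is to build, near each point $p_0\in E$, an explicit Lipschitz function as an infimum of ``cone functions'' centred at points of $E$. This mirrors the Euclidean proof that a set satisfying a cone condition lies in a Lipschitz graph.

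\emph{Choice of direction.} Since $\nu_0\in\mathbb S^{m-1}$, there is an index $1\le j\le m$ with $|(\nu_0)_j|\ge 1/\sqrt m$. Replacing $\nu_0$ by $-\nu_0$ if necessary (condition \eqref{eq_prolip} is symmetric in sign), we may assume $(\nu_0)_j\ge 1/\sqrt m$.

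\emph{Localisation.} Fix $p_0\in E$. By \eqref{eq_pro2} in Lemma \ref{lem_pi} we have $|X_j\pi_p(x)-e_j|\le\omega_\pi(d(p,x))$. Choose $r>0$ with $2r<r_E$ and $\omega_\pi(4r)\le 1/(16\sqrt m)$. Set $E_0\ceq E\cap \overline{B(p_0,r)}$ and $B\ceq B(p_0,r)$. Then any two points of $E_0$ are at distance less than $r_E$, so \eqref{eq_prolip} applies to them.

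\emph{The function.} With $\lambda\ceq 1/(8\sqrt m)$, define for $x\in B$
\[
g(x)\ceq \inf_{p\in E_0}\Bigl(\langle\nu_0,\pi_p(x)\rangle+\lambda\, d(p,x)\Bigr).
\]
Three properties have to be checked.
\begin{itemize}
\item[(a)] \emph{$g$ is finite and Lipschitz.} By \eqref{eq_pro1} each term is bounded below by $-C_\pi d(p,x)$, hence uniformly bounded below on $B$. Each map $x\mapsto\langle\nu_0,\pi_p(x)\rangle$ is smooth with horizontal gradient bounded uniformly in $p$ (again by \eqref{eq_pro2}), and $d(p,\cdot)$ is $1$-Lipschitz. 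So the family is equi-Lipschitz with respect to $d$, and therefore so is its infimum.
\item[(b)] \emph{$E_0\subset\{g=0\}$.} For $q\in E_0$, the term with $p=q$ vanishes. For $p\neq q$, \eqref{eq_prolip} gives
\[
\langle\nu_0,\pi_p(q)\rangle+\lambda d(p,q)\ge\bigl(\lambda-\tfrac1{16\sqrt m}\bigr)d(p,q)>0 .
\]
Hence $g(q)=0$.
\item[(c)] \emph{Monotonicity along $X_j$.} Let $\gamma$ be an integral curve of $X_j$ inside $B$. For each $p\in E_0$, the first term increases along $\gamma$ with speed at least $(\nu_0)_j-\omega_\pi(2r)\ge 1/\sqrt m-1/(16\sqrt m)$. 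The second term decreases with speed at most $\lambda$, because $\gamma$ is subunit. So every term is increasing along $\gamma$ with rate at least
\[
C\ceq \frac{1}{\sqrt m}-\frac{1}{16\sqrt m}-\frac{1}{8\sqrt m}>0 .
\]
An infimum of functions each increasing along $\gamma$ with rate at least $C$ has the same property.
\end{itemize}

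\emph{From monotonicity to the a.e.\ bound.} Since $g$ is Lipschitz, its distributional derivative $X_jg$ is an $L^\infty$ function. Property (c) then gives $X_jg\ge C$ $\leb^n$-a.e.\ on $B$. I would justify this by testing against nonnegative test functions and using the flow of $X_j$, or by invoking the Lie-derivative characterisation of \cite{venturini} already used in Theorem \ref{teo_diff}. Then $S\ceq\{x\in B: g(x)=0\}$ is an $X$-Lipschitz hypersurface in the sense of Definition \ref{def_liphyp}, taking $R$ so small that $B(x,R)\subset B$ for points $x\in S$ near $p_0$. Moreover $E\cap B(p_0,r/2)\subset E_0\subset S$, which proves the first claim.

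\emph{Finiteness of the measure.} The closure $\overline E$ is compact in $U$. I would apply the construction to $\overline E$, which satisfies \eqref{eq_prolip} by continuity of $\pi$ and $d$ (or simply cover $E$ directly). Cover it by finitely many of the neighbourhoods above, each contained in an $X$-Lipschitz hypersurface. By \cite{vittone2012}, each such hypersurface has locally finite $\hau^{Q-1}$-measure; shrinking the neighbourhoods slightly so that they sit compactly inside the balls $B$ gives $\hau^{Q-1}(E)<+\infty$.

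The main obstacle is the rigorous passage from the pointwise monotonicity in (c) to the distributional bound $X_jg\ge C$ a.e. This requires combining Lipschitz regularity with respect to $d$ and the flow of $X_j$. I expect it to follow from the standard fact that for $d$-Lipschitz functions the distributional $X_jg$ coincides a.e.\ with the derivative along integral curves of $X_j$.
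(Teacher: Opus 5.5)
Your proposal is correct and follows essentially the paper's argument. You choose $j$ with $|(\nu_0)_j|\ge 1/\sqrt m$, localise, take an envelope over $p\in E$ of the cone functions $\langle\nu_0,\pi_p(\cdot)\rangle\pm\lambda\, d(p,\cdot)$, use \eqref{eq_prolip} to place $E$ in its zero set, and conclude with \cite{vittone2012}. The paper takes the supremum of $\langle\nu_0,\pi_p\rangle-\frac{a_0}{8}d(p,\cdot)$ where you take the infimum of $\langle\nu_0,\pi_p\rangle+\lambda d(p,\cdot)$. The only real difference is how the lower bound on $X_j$ reaches the envelope: the paper approximates by finite maxima over a countable dense subset and passes to the limit in the distributional inequality. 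Your monotonicity along integral curves of $X_j$ survives the infimum pointwise and gives $X_jg\ge C$ a.e.\ by the standard flow computation, which is equally valid and slightly more direct.
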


\begin{proof}
Choose $j_0\in\{1,\dots,m\}$ such that
$|(\nu_0)_{j_0}|\geq m^{-1/2}$. Up to replacing $\nu_0$ with
$-\nu_0$, we may assume that
\[
a_0\ceq(\nu_0)_{j_0}\geq\frac{1}{\sqrt m}.
\]
We cover $\overline{E}$ with finitely many pairs of open sets
\[
V'_\alpha\subset\subset V_\alpha \subset\subset U
\]
such that 
\begin{equation}\label{eq_lipdiam}
\operatorname{diam}_d(V_\alpha)<r_E, \qquad m\omega_\pi\bigl(\operatorname{diam}_d(V_\alpha)\bigr) \leq\frac14,
\end{equation}
where $\omega_\pi$ is the modulus of continuity supplied by Lemma \ref{lem_pi}. Fix an index $\alpha$ such that $E\cap V'_\alpha\neq\emptyset$, and set
\[
\ell_p(x)\ceq\langle\nu_0,\pi_p(x)\rangle,
\qquad p\in E\cap V_\alpha,\quad x\in V_\alpha.
\]
For every such $p$ and $x$, Lemma \ref{lem_pi} and \eqref{eq_lipdiam} give
\begin{align*}
X_{j_0}\ell_p(x)=a_0+
\sum_{i=1}^m(\nu_0)_i
\bigl(X_{j_0}(\pi_p)_i(x)-\delta_{i,j_0}\bigr)\geq a_0-
\omega_\pi(d(p,x))\sum_{i=1}^m|(\nu_0)_i|\geq a_0-\frac{1}{4\sqrt m}
\geq\frac{3a_0}{4}.
\end{align*}
For $p\in E\cap V_\alpha$, define
\[
\psi_p(x)\ceq\ell_p(x)-\frac{a_0}{8}d(p,x),
\qquad x\in V_\alpha.
\]
The family $(\psi_p)_{p\in E\cap V_\alpha}$ is equi-Lipschitz on compact subsets of $V_\alpha$. Since $d(p,\cdot)$ is $1$-Lipschitz with respect to $d$, its horizontal derivatives satisfy $|X_jd(p,\cdot)|\leq1$ almost everywhere. It follows that
\begin{equation}\label{eq_psider}
X_{j_0}\psi_p
\geq\frac{3a_0}{4}-\frac{a_0}{8}
=\frac{5a_0}{8}
\qquad\leb^n\text{-a.e.\ on }V_\alpha
\end{equation}
for every $p\in E\cap V_\alpha$. Define
\[
f_\alpha(x)\ceq
\sup_{p\in E\cap V_\alpha}\psi_p(x),
\qquad x\in V_\alpha.
\]
As the supremum of an equi-Lipschitz family, $f_\alpha$ is locally Lipschitz. Let $x\in E\cap V'_\alpha$. Since $\psi_x(x)=0$, one has $f_\alpha(x)\geq0$. On the other hand, if $p\in E\cap V_\alpha$ and $p\neq x$, then \eqref{eq_prolip}, \eqref{eq_lipdiam}, and the fact that  $a_0/8\geq1/(8\sqrt m)$ yield
\[
\psi_p(x)
\leq
\frac{1}{16\sqrt m}d(p,x)-\frac{a_0}{8}d(p,x)
<0.
\]
Consequently,
\begin{equation}\label{eq_zero_lip}
E\cap V'_\alpha
\se
\{x\in V_\alpha:f_\alpha(x)=0\}.
\end{equation}
We claim that
\begin{equation}\label{eq_boundfa}
X_{j_0}f_\alpha\geq\frac{5a_0}{8}
\qquad\leb^n\text{-a.e.\ on }V_\alpha.
\end{equation}
Let $(p_h)_{h\in\N}$ be a countable dense subset of $E\cap V_\alpha$. For every fixed $x\in V_\alpha$, the map $p\mapsto\psi_p(x)$ is continuous, and hence
\[
f_\alpha(x)=\sup_{h\in\N}\psi_{p_h}(x).
\]
For $N\in\N$, set
\[
f_{\alpha,N}\ceq\max_{1\leq h\leq N}\psi_{p_h}.
\]
It is easy to check that \eqref{eq_psider} gives
\[
X_{j_0}f_{\alpha,N}\geq\frac{5a_0}{8}
\qquad\leb^n\text{-a.e.\ on }V_\alpha
\]
for every $N\in\N$. The sequence $(f_{\alpha,N})_N$ converges pointwise increasingly to $f_\alpha$ and, on every compact subset of $V_\alpha$, uniformly. Therefore, for every non-negative $\vp\in C_c^\infty(V_\alpha)$,
\[
-\int_{V_\alpha}f_\alpha\operatorname{div}(\vp X_{j_0})\,d\leb^n=-\lim_{N\to+\infty}
\int_{V_\alpha}f_{\alpha,N}
\operatorname{div}(\vp X_{j_0})d\leb^n\geq\frac{5a_0}{8}
\int_{V_\alpha}\vp d\leb^n.
\]
Since $f_\alpha$ is locally Lipschitz, the latter proves \eqref{eq_boundfa}. Thus
\[
S_\alpha\ceq\{x\in V_\alpha:f_\alpha(x)=0\}
\]
is an $X$-Lipschitz hypersurface containing $E\cap V'_\alpha$; see \cite[Definition 2.12]{dv}. Moreover,
\[
\hau^{Q-1}(S_\alpha\cap\overline{V'_\alpha})<+\infty
\]
by \cite[Theorem 1.3 and Corollary 4.14]{vittone2012}. Since finitely many sets $V'_\alpha$ cover $\overline{E}$, the conclusion follows.
\end{proof}

In the next result we prove that we can decompose the intrinsic jump set $\J_u$ in pieces contained in sets that satisfy the cone condition \eqref{eq_prolip}. In order to do so, we will make use of the following compact notation. Let $u \in L^1_\loc(\Omega)$, $\eta \in \N$: we define
\[
\J_{u,\eta} \ceq   \{x \in \J_u :|u^+(x)-u^-(x)| \geq 1/\eta \}.
\]

\begin{lem}\label{lem_decomp}
   Let $\Omega \se (\R^n,X)$ be an open set. For every $u \in L^1_\loc(\Omega)$ and $\eta \in \N$ the set $\J_{u,\eta}$ is contained in a countable union of measurable sets $(\Gamma_{u,\eta,s})_{s \in \N}$ with the following properties: for every $s \in \N$ there exist a good set $U_s$, a compact set $K_s \subset U_s \cap \Omega$, a vector $\nu_s \in \mathbb{S}^{m-1}$ and a radius $r_s>0$ such that $\Gamma_{u,\eta,s} \se K_s$ and
   \begin{equation}\label{eq_bislip}
|\langle \nu_s,\pi_p(q) \rangle| \leq \frac{1}{16\sqrt{m}}d(p,q)
   \end{equation}
   whenever $p,q \in \Gamma_{u,\eta,s}$ and $d(p,q)<r_s$.
\end{lem}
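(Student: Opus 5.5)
We prove Lemma \ref{lem_decomp} by a countable sorting of the jump points, followed by a contradiction argument built on Lemma \ref{lem_Aset}.

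\emph{Sorting.} Cover $\Omega$ by countably many good sets $U\in\mathcal U$ and exhaust each $U\cap\Omega$ by countably many compact sets $K$. Fix a countable dense set $D\subset\mathbb S^{m-1}$ and a countable dense set $\mathbb Q$ of values. For $p\in\J_{u,\eta}\cap K$ we then choose:
\begin{itemize}
\item $\nu_s\in D$ with $|\nu_u(p)-\nu_s|<\ve$, where $\ve=\ve(m)$ is small, to be fixed later;
\item rationals $\alpha,\beta$ with $|u^+(p)-\alpha|,|u^-(p)-\beta|<1/(8\eta)$;
\item a radius $r_s=1/k$ such that, for all $\rho<r_s$,
\[
\frac{1}{\rho^Q}\int_{F_p(\delta_\rho\A(R_0))}\bigl|u-w_{u^+(p),u^-(p),\nu_u(p)}\circ\delta_{1/\rho}\circ F_p^{-1}\bigr|\,d\leb^n\le \gamma ,
\]
with $\gamma$ a small constant depending on $\eta$, $c_0$ and $C_J$.
\end{itemize}
This uniform $L^1$ control is available by Proposition \ref{pro_altern} together with the change of variables \eqref{eq_jac2}. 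The set $\Gamma_{u,\eta,s}$ consists of the points of $\J_{u,\eta}\cap K_s$ sharing the data $(U_s,K_s,\nu_s,\alpha,\beta,k)$. These are countably many sets and they cover $\J_{u,\eta}$. Measurability follows by writing each condition through countably many rational radii, using continuity in $\rho$ and in $p$ of the integrals; I expect this to be routine but tedious. One could instead replace "measurable" by taking Borel hulls, but the statement asks that the pieces themselves be measurable.

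\emph{Cone condition.} Suppose $p,q\in\Gamma_{u,\eta,s}$ with $\rho=d(p,q)<r_s$ and $\langle\nu_s,\pi_p(q)\rangle>\frac{1}{16\sqrt m}\rho$. Since $|\pi_p(q)|\le C_\pi\rho$ and $\nu_u(p)$ is $\ve$-close to $\nu_s$, choosing $\ve\le 1/(32\sqrt m C_\pi)$ gives $\langle\nu_u(p),\pi_p(q)\rangle\ge\theta\rho$ with $\theta=1/(32\sqrt m)$. We then apply Lemma \ref{lem_Aset} with this $\theta$, with $\nu_p=\nu_u(p)$ and $\nu_q=\nu_u(q)$, after shrinking $r_s\le r_0$. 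This yields a set $\mathcal A$ with $\leb^n(\mathcal A)\ge c_0\rho^Q$ on which $u$ is seen from the $+$ side at $p$ and from the $-$ side at $q$. The blow-up estimate at $p$ makes $u$ close to $u^+(p)$ in $L^1(\mathcal A)$ up to an error $\gamma\rho^Q$, and the estimate at $q$ makes $u$ close to $u^-(q)$ up to the same error. Hence
\[
c_0\rho^Q\,|u^+(p)-u^-(q)|\le 2\gamma\rho^Q .
\]
On the other hand, $|u^+(p)-u^-(q)|\ge|\alpha-\beta|-1/(4\eta)\ge 1/(2\eta)$, since $|\alpha-\beta|\ge 1/\eta-1/(4\eta)$. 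Choosing $\gamma<c_0/(4\eta)$ gives a contradiction. The reverse inequality is handled in the same way using the bracketed case of Lemma \ref{lem_Aset}.

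\emph{Main obstacle.} The key point is that $\gamma$ must be fixed \emph{before} sorting. This works because $c_0$ and $R_0$ depend only on $U$ and $\theta$, and $\theta$ depends only on $m$ and $C_\pi$. The remaining care is in the measurability of the pieces.
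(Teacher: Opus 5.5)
Your argument is correct, but it takes a different route from the paper.

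\textbf{The paper's route.} The paper never sorts the true jump data. It fixes $\theta=\frac{1}{16\sqrt m}$ in Lemma~\ref{lem_Aset}. It then measures the blow-up error at $p$ against a \emph{fixed} countable profile $w_{a,b,\nu_j}$ (with $a,b\in\mathbb Q$, $a-b\geq\frac{1}{2\eta}$, and $\nu_j$ in a dense set), along dyadic scales only, and sets $\Gamma_{j,a,b,s}=\bigcap_{k\geq s}\{p\in K:\mathcal E_{j,a,b}(p,2^{-k})\leq\ve\}$. Since $\mathcal E_{j,a,b}(\cdot,r)$ does not involve $(u^+,u^-,\nu_u)$, measurability is automatic. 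In the contradiction, $p$ and $q$ are compared with the same profile, so Lemma~\ref{lem_Aset} is used with $\nu_p=\nu_q=\nu_j$ and the gap $a-b$ enters directly. The only cost is the factor $2^Q$ coming from $\rho\leq r<2\rho$.

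\textbf{Your route.} You compare with the true profiles at $p$ and $q$, which is essentially the mechanism the paper uses later in Lemma~\ref{lem_condwhit}. The price is halving $\theta$ via $|\pi_p(q)|\leq C_\pi\rho$, applying Lemma~\ref{lem_Aset} with two different normals (which its statement allows), and proving $|u^+(p)-u^-(q)|\geq\frac{1}{2\eta}$ from the shared rationals $\alpha,\beta$. All of this checks out, and it lets you work at the exact scale $\rho=d(p,q)$.

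\textbf{Measurability needs a correct justification.} Your error integral is \emph{not} continuous in $p$, because it depends on $p$ through the jump data. The right argument is as follows.
\begin{itemize}
\item The quantity $\int_{\A(R_0)}|u(F_p(\delta_\rho z))-w_{\alpha,\beta,\nu}(z)|\,|\det JF_p(\delta_\rho z)|\,dz$ is continuous in $(p,\alpha,\beta,\nu,\rho)$.
\item Continuity in $\rho$ reduces the condition for all $\rho<1/k$ to countably many rational radii, as you say.
\item You then compose with the jump data, using that $\J_u$ is Borel and $(u^+,u^-,\nu_u)$ admits a Borel choice, as provided by \cite[Proposition 2.28]{dv}.
\end{itemize}
With that citation your pieces are Borel. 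The paper's fixed-profile device is precisely what lets it avoid this input at this stage.
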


\begin{proof}
Since the family of good sets is countable and every open subset of $\R^n$ admits a compact exhaustion, it is enough to fix a good set $U$ and a compact set $K \subset U \cap \Omega$ and prove the claim on $\J_{u,\eta} \cap K$. Apply Lemma \ref{lem_Aset} with $\theta=\frac{1}{16\sqrt{m}}$. We obtain constants
\[
r_0>0,\qquad R_0>0,\qquad c_0>0.
\]
By Proposition \ref{pro_propr}, after possibly reducing $r_0$, there exists $C_J\geq 1$ such that
\begin{equation}\label{eq_43_jac}
 C_J^{-1}\leq |\det JF_p(\delta_rz)|\leq C_J
 \qquad\text{for every }p\in K,\ z\in \A(R_0),\ 0<r<r_0.
\end{equation}
Since $K$ is compact we may assume, up to reducing $r_0$ again if necessary, that
\[
\{ F_p(\delta_r z): p \in K, z \in  \overline{\mathscr{A}(R_0)} , 0 \leq r \leq r_0  \} \subset \subset \Omega.
\]
 Let $(\nu_j)_{j\in\N}$ be a countable dense subset of $\mathbb{S}^{m-1}$ and set
\[
\ve \ceq \frac{c_0}{2^{Q+3}\eta}.
\]
We fix a Borel representative of $u$. For $j\in\N$ and $a,b\in\mathbb Q$ with $a-b\geq \frac{1}{2\eta}$, define, for $p\in K$ and $0<r<r_0$,
\begin{align}
\label{eq_43_error}
\mathcal E_{j,a,b}(p,r)
&\ceq \int_{\A(R_0)}
 \left|u(F_p(\delta_rz))-w_{a,b,\nu_j}(z)\right|
 |\det JF_p(\delta_rz)|dz\\
&=\frac{1}{r^Q}\int_{F_p(\delta_r\A(R_0))}
 \left|u(y)-w_{a,b,\nu_j}
 \big(\delta_{1/r}F_p^{-1}(y)\big)\right|d\leb^n(y).
\nonumber
\end{align}
For every fixed $r$, the map $p \to \mathcal E_{j,a,b}(p,r)$ is measurable. For every $s\in\N$ such that $2^{-s}<r_0$, set
\begin{equation}\label{eq_43_Gamma}
\Gamma_{j,a,b,s}\ceq
\bigcap_{k\geq s}
\left\{p\in K:\mathcal E_{j,a,b}(p,2^{-k})\leq\ve\right\}.
\end{equation}
The sets  $\Gamma_{j,a,b,s}$ are measurable and they cover $\J_{u,\eta}\cap K$. Indeed, let $p\in\J_{u,\eta}\cap K$. Up to replacing the jump data $(u^+(p),u^-(p),\nu_p)$ with $(u^-(p),u^+(p),-\nu_p)$, we may assume that
\[
u^+(p)>u^-(p),\qquad u^+(p)-u^-(p)\geq\frac{1}{\eta}.
\]
Set $w_p\ceq w_{u^+(p),u^-(p),\nu_p}$. The map
\[
(\alpha,\beta,\nu)\longmapsto w_{\alpha,\beta,\nu}
\]
is continuous from $\R^2\times\mathbb{S}^{m-1}$ to $L^1(\A(R_0))$. Hence we may choose $j\in\N$ and $a,b\in\mathbb Q$ such that
\begin{equation}\label{eq_43_approx_profile}
 a-b\geq\frac{1}{2\eta},
 \qquad
 C_J\int_{\A(R_0)}|w_p-w_{a,b,\nu_j}|dz<\frac\ve2.
\end{equation}
By the definition of approximate $X$-jump,
\[
\int_{\A(R_0)}|u(F_p(\delta_rz))-w_p(z)|dz\xrightarrow{r\to0}0.
\]
Combining the above convergence with \eqref{eq_43_jac} and \eqref{eq_43_approx_profile}, we obtain
\[
\mathcal E_{j,a,b}(p,r)
\leq C_J\int_{\A(R_0)}|u(F_p(\delta_rz))-w_p(z)|dz
   +C_J\int_{\A(R_0)}|w_p-w_{a,b,\nu_j}|dz
<\ve
\]
for every sufficiently small $r>0$. It follows that $p\in\Gamma_{j,a,b,s}$ for some $s\in\N$. Thus the sets in \eqref{eq_43_Gamma} cover $\J_{u,\eta}\cap K$. It remains to prove that each $\Gamma_{j,a,b,s}$ satisfies \eqref{eq_bislip}. Fix $s \in \N$, $p,q\in\Gamma_{j,a,b,s}$ and set
\[
0<\rho\ceq d(p,q)<2^{-s}.
\]
Choose $k\geq s$ such that
\begin{equation}\label{eq_43_dyadic}
 \rho\leq r\ceq2^{-k}<2\rho.
\end{equation}
Assume by contradiction that
\[
 |\langle\nu_j,\pi_p(q)\rangle|>\frac{1}{16\sqrt{m}}\rho.
\]
Suppose first that $\langle\nu_j,\pi_p(q)\rangle>\frac{1}{16\sqrt{m}}\rho$. We apply Lemma \ref{lem_Aset} with $\nu_p=\nu_q=\nu_j$. We obtain an open set $\mathcal A$ such that
\begin{align*}
\mathcal A&\se F_p(\delta_\rho \A(R_0))\cap F_q(\delta_\rho \A(R_0)),\\
 \leb^n( \mathcal A)&\geq c_0\rho^Q,\\
 \langle\nu_j,\pi_p(y)\rangle&>0,  \quad\text{for every }y\in \mathcal A.\\
 \langle\nu_j,\pi_q(y)\rangle&<0,  
 \quad\text{for every }y\in  \mathcal A.
\end{align*}
By the homogeneity of $\nl\cdot\nr_*$ and \eqref{eq_43_dyadic},
\[
\delta_\rho \A(R_0)\se\delta_r\A(R_0),
\]
and therefore
\[
\mathcal A\se F_p(\delta_r\A(R_0))\cap F_q(\delta_r\A(R_0)).
\]
Moreover, for every $y\in  \mathcal A$, 
\[
L_{\nu_j}\big(\delta_{1/r}F_p^{-1}(y)\big)=\frac1r\langle\nu_j,\pi_p(y)\rangle>0
 \text{ and }
L_{\nu_j}\big(\delta_{1/r}F_q^{-1}(y)\big)=\frac1r\langle\nu_j,\pi_q(y)\rangle<0.
\]
Consequently (see Proposition \ref{pro_altern}) for every $y\in\mathcal A$ we have
\[
w_{a,b,\nu_j}\bigl(\delta_{1/r}F_p^{-1}(y)\bigr)=a,
\qquad
w_{a,b,\nu_j}\bigl(\delta_{1/r}F_q^{-1}(y)\bigr)=b.
\]
Combining the latter with \eqref{eq_43_error} we obtain
\begin{align*}
\frac{c_0}{2\eta}\rho^Q
&\leq (a-b)\leb^n(\mathcal A)\leq \int_{\mathcal  A}|u(y)-a|d\leb^n(y)
     +\int_{ \mathcal A}|u(y)-b|d\leb^n(y)\\
&\leq r^Q\big(\mathcal E_{j,a,b}(p,r)
               +\mathcal E_{j,a,b}(q,r)\big)\leq 2\ve r^Q
 <2^{Q+1}\ve\rho^Q
 =\frac{c_0}{4\eta}\rho^Q,
\end{align*}
which is a contradiction. If $\langle\nu_j,\pi_p(q)\rangle<-\frac{1}{16\sqrt{m}}\rho$, the signs furnished by Lemma \ref{lem_Aset} are reversed and the same argument gives an analogous contradiction. Hence
\[
|\langle\nu_j,\pi_p(q)\rangle|
\leq\frac{1}{16\sqrt m}d(p,q)
\qquad\text{whenever }p,q\in\Gamma_{j,a,b,s},\quad
0<d(p,q)<2^{-s},
\]
concluding the proof (up to relabeling the sets $\Gamma_{j,a,b,s}$).
\end{proof}

We observe that Lemma \ref{lem_decomp} above also proves the intrinsic countably Lipschitz rectifiability of the intrinsic jump set. In the next result, Lemma \ref{lem_condwhit}, we will prove that one can improve the cone condition \eqref{eq_bislip}  and obtain a Whitney condition \eqref{eq_44_whitney}.

\begin{lem}\label{lem_condwhit}
  Let $\Omega \se (\R^n,X)$ be an open set and $u \in L^1_\loc(\Omega)$. Let $\Gamma$ be one of the sets $\Gamma_{u,\eta,s}$ from Lemma \ref{lem_decomp} for some $s,\eta \in \N$. Then there exists a countable family of compact sets $(E_h)_{h \in \N} \se \Gamma \cap \J_{u,\eta}$ such that 
  \begin{equation}\label{eq_hau0}
  \hau^{Q-1}\left((\Gamma \cap \J_{u,\eta}) \setminus \bigcup_{h \in \N} E_h\right)=0,
  \end{equation}
  and, moreover, the following holds. Choose an orientation on $\J_{u,\eta}$ so that 
  \[
  u^+>u^- \quad \text{ on }\quad \J_{u,\eta},
  \]
  and denote by $\nu_u$ the corresponding normal. The map
  \[
p \longmapsto (u^+(p),u^-(p),\nu_u(p))
  \]
  is continuous on every $E_h$. Moreover, for every $h \in \N$,
    \begin{equation}\label{eq_44_whitney}
    \frac{|\langle \nu_u(p), \pi_p(q) \rangle|}{d(p,q)}\longrightarrow0
    \end{equation}
    uniformly for $p,q \in E_h$, $d(p,q) \to 0$.
\end{lem}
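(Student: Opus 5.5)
The plan is to combine the finiteness $\hau^{Q-1}(\Gamma)<+\infty$ from Lemma \ref{lem_lip} (applied via \eqref{eq_bislip}, since $\Gamma\se K_s\subset\subset U_s$) with Lusin's and Egorov's theorems, and then to rerun the contradiction argument of Lemma \ref{lem_decomp} with an arbitrarily small angle $\theta$ in place of $\frac{1}{16\sqrt m}$.

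\emph{Measurability and Lusin.} First we check that, on $\Gamma\cap\J_{u,\eta}$, the jump data $(u^+,u^-,\nu_u)$, oriented so that $u^+>u^-$, are $\hau^{Q-1}$-measurable. The jump set and the data can be described through countable limits of the Borel quantities $\mathcal E_{j,a,b}(p,2^{-k})$ and their analogues with real parameters. We then use the finite Borel measure $\mu\ceq\hau^{Q-1}\llcorner\Gamma$, which is finite because of Lemma \ref{lem_lip}. Consider the error functions
\[
e_k(p)\ceq \int_{\A(R_0)}\bigl|u(F_p(\delta_{2^{-k}}z))-w_{u^+(p),u^-(p),\nu_u(p)}(z)\bigr|\,dz .
\]
By Proposition \ref{pro_altern}, $e_k(p)\to0$ for every $p\in\Gamma\cap\J_{u,\eta}$. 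By Lusin's and Egorov's theorems we obtain compact sets $E_h\se\Gamma\cap\J_{u,\eta}$ with the following properties: their union has full $\mu$-measure, which gives \eqref{eq_hau0}; the data are continuous on each $E_h$; and $\sup_{p\in E_h}e_k(p)\to0$ as $k\to\infty$. Inner regularity of $\mu$ is what allows us to take the sets compact. The measurability bookkeeping is technical but routine.

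\emph{Whitney condition.} Fix $h$ and $\theta>0$, and take $r_0,R_0,c_0$ from Lemma \ref{lem_Aset}. Enlarging $R_0$ if needed, we may assume the $e_k$ are computed on $\A(R_0)$. By uniform continuity of the data on $E_h$ and the uniform convergence of $e_k$, pick $\rho_0>0$ such that, for $p,q\in E_h$ with $d(p,q)<\rho_0$ and $r=2^{-k}\in[\rho,2\rho)$, we have $e_k(p),e_k(q)\le\ve$. Also require $|u^\pm(p)-u^\pm(q)|$ and $|\nu_u(p)-\nu_u(q)|$ to be small.

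Suppose that $\langle\nu_u(p),\pi_p(q)\rangle\ge\theta\rho$; the other sign is symmetric. Apply Lemma \ref{lem_Aset} with $\nu_p=\nu_u(p)$ and $\nu_q=\nu_u(q)$. This gives a set $\mathcal A$ with $\leb^n(\mathcal A)\ge c_0\rho^Q$ on which $w_p\circ\delta_{1/r}F_p^{-1}=u^+(p)$ and $w_q\circ\delta_{1/r}F_q^{-1}=u^-(q)$. As in Lemma \ref{lem_decomp}, the Jacobian bound \eqref{eq_jac1} then yields
\[
\bigl(u^+(p)-u^-(q)\bigr)\,c_0\rho^Q\le C_J\,2\ve\,(2\rho)^Q .
\]
Since $u^+(p)-u^-(q)\ge u^+(q)-u^-(q)-|u^+(p)-u^+(q)|\ge\frac1{2\eta}$, this is a contradiction once $\ve<\frac{c_0}{2^{Q+3}C_J\eta}$. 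Hence $|\langle\nu_u(p),\pi_p(q)\rangle|<\theta\,d(p,q)$ whenever $d(p,q)<\min\{\rho_0,r_0\}$. Since $\theta$ is arbitrary, \eqref{eq_44_whitney} follows.

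\emph{Main obstacle.} The key point is that Lemma \ref{lem_Aset} allows distinct normals $\nu_p,\nu_q$, which is why continuity of $\nu_u$ is not even needed in the geometric step. The real difficulty is uniformity: the error must be small at both $p$ and $q$ simultaneously at a common dyadic scale, with constants depending only on $\theta$. Egorov's theorem supplies exactly this. I expect the measurability of the jump data as a function of $p$ to be the most delicate technical point to verify carefully.
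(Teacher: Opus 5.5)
Your proposal follows the paper's proof essentially step by step. You get $\hau^{Q-1}(\Gamma)<+\infty$ from Lemma \ref{lem_lip}, apply Lusin and Egorov to the finite measure $\hau^{Q-1}$ restricted to $\Gamma$, and then rerun the contradiction of Lemma \ref{lem_decomp} through Lemma \ref{lem_Aset}, with distinct normals $\nu_u(p),\nu_u(q)$ and arbitrarily small $\theta$. Using continuity of $u^+$ where the paper uses $u^-$, and keeping $C_J$ outside the error functional, are immaterial differences. The measurability you flag as delicate is simply quoted in the paper from \cite[Proposition 2.28]{dv}.

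The one point to tighten is the order of quantifiers on the radius. Your $e_k$ are fixed on $\A(R_0)$, and made uniformly small via Egorov, before $\theta$ is chosen. Lemma \ref{lem_Aset}, however, only supplies $R_0=R_0(\theta)$, so ``enlarging $R_0$'' goes in the wrong direction: you need the pre-fixed domain to contain $\A(R_0(\theta))$ for every $\theta$. You can fix this in any of three ways:
\begin{enumerate}
\item[(i)] Do as the paper does and run Egorov simultaneously for the countably many radii $R_\ell$ attached to $\theta_\ell=2^{-\ell}$.
\item[(ii)] Observe that the proof of Lemma \ref{lem_Aset} actually yields an $R_0$ independent of $\theta\in(0,1]$.
\item[(iii)] Use the homogeneity identity
\[
\int_{\A(2^jR_0)}\bigl|u(F_p(\delta_{2^{-k}}z))-w_p(z)\bigr|\,dz=2^{jQ}e_{k-j}(p),
\]
which holds because your $e_k$ carry no Jacobian factor.
\end{enumerate}
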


    \begin{proof}

Let $K \subset U \cap \Omega$, where $K$ is the compact set and $U$ the good set associated with $\Gamma$ (as in the proof of Lemma \ref{lem_decomp}). The set $\Gamma$ satisfies \eqref{eq_bislip} by Lemma \ref{lem_decomp}; hence, by Lemma \ref{lem_lip}, $\hau^{Q-1}(\Gamma)<+\infty$. Moreover, by \cite[Proposition 2.28]{dv}, the map
\[
p\longmapsto\big(u^+(p),u^-(p),\nu_u(p)\big)
\]
may be chosen Borel. For every $\ell\in\N$, set
\[
\theta_\ell\ceq2^{-\ell},
\]
and apply Lemma \ref{lem_Aset} with $\theta=\theta_\ell$: denote the corresponding constants by
\[
r_\ell>0,\qquad R_\ell>0,\qquad c_\ell>0.
\] 
For all sufficiently large $k$ we  define, for $p \in \Gamma \cap \J_{u,\eta}$
\begin{equation}\label{eq_44_error}
\mathcal R_{\ell,k}(p)\ceq\int_{\A(R_\ell)}
\left|u\big(F_p(\delta_{2^{-k}}z)\big)-w_{u^+(p),u^-(p),\nu_u(p)}(z)\right|
\left|\det JF_p(\delta_{2^{-k}}z)\right|dz.
\end{equation}
By the definition of approximate $X$-jump,
\[
\mathcal R_{\ell,k}(p) \xrightarrow{k \to +\infty}0
\]
for every $p \in \Gamma \cap \J_{u,\eta}$. By using Lusin's Theorem and Egorov's Theorem, for every $h \in \N$ we can find a compact set $E_h \se \Gamma \cap \J_{u,\eta}$ such that 
\[
\hau^{Q-1}((\Gamma \cap \J_{u,\eta})\setminus E_h)<\frac{1}{h},
\]
and the map $p\longmapsto\big(u^+(p),u^-(p),\nu_u(p)\big)$ is uniformly continuous on $E_h$ and, for every $\ell \in \N$, 
\begin{equation}\label{eq_44_error_to_zero}
\mathcal R_{\ell,k}(p)\xrightarrow{k\to+\infty}0,
\end{equation}
uniformly for $p \in E_h$.  The latter clearly implies \eqref{eq_hau0}. Fix one of these compact sets and write $E\ceq E_h$. Fix $\ell \in \N$. By the uniform continuity of $u^-$ on $E$, there exists $\sigma_\ell>0$ such that, whenever $p,q \in E$ and $d(p,q)<\sigma_\ell$, one has 
\begin{equation}\label{eq_44_beta_cont}
    |u^-(p)-u^-(q)|<\frac{1}{2\eta}.
\end{equation}
By the uniform convergence in \eqref{eq_44_error_to_zero}, after decreasing $\sigma_\ell$ if necessary, we may also assume that
\begin{equation}\label{eq_44_uniform_error}
\sup_{x\in E}\mathcal R_{\ell,k}(x)<\frac{c_\ell}{2^{Q+3}\eta}
\end{equation}
for every $k$ such that $2^{-k}<2\sigma_\ell$. Without loss of generality, we may also assume $2\sigma_\ell<r_\ell$ and $F_x(\delta_r \mathscr{A}(R_\ell)) \subset \Omega$ for every $x \in E$ and $0<r<2\sigma_\ell$. Let $p,q\in E$ and set
\[
0<\rho\ceq d(p,q)<\sigma_\ell.
\]
Choose $k$ so that
\begin{equation}\label{eq_44_dyadic}
\rho\leq r\ceq2^{-k}<2\rho.
\end{equation}
Assume by contradiction that
\[
|\langle\nu_u(p),\pi_p(q)\rangle|>\theta_\ell\rho.
\]
Suppose first that
\[
\langle\nu_u(p),\pi_p(q)\rangle>\theta_\ell\rho.
\]
We apply Lemma \ref{lem_Aset}  with $\nu_p=\nu_u(p)$ and $\nu_q=\nu_u(q)$. Hence there exists an open set $\mathcal A\se\Omega$ such that
\[
\mathcal A\se F_p(\delta_\rho \A(R_\ell))\cap F_q(\delta_\rho \A(R_\ell)),
\]
\begin{equation}\label{eq_44_measure}
\leb^n(\mathcal A)\geq c_\ell\rho^Q,
\end{equation}
and
\begin{equation}\label{eq_44_signs}
\langle\nu_u(p),\pi_p(y)\rangle>0,
\qquad
\langle\nu_u(q),\pi_q(y)\rangle<0
\qquad\text{for every }y\in \mathcal  A.
\end{equation}
By the homogeneity of $\nl\cdot\nr_*$ and \eqref{eq_44_dyadic},
\[
\mathcal A\se F_p(\delta_r \A(R_\ell))\cap F_q(\delta_r \A(R_\ell)).
\]
Moreover, for every $y\in \mathcal A$, we have by \eqref{eq_44_signs}
\[
L_{\nu_u(p)}\big(\delta_{1/r}F_p^{-1}(y)\big)
=\frac1r\langle\nu_u(p),\pi_p(y)\rangle>0
\text{ and }
L_{\nu_u(q)}\big(\delta_{1/r}F_q^{-1}(y)\big)
=\frac1r\langle\nu_u(q),\pi_q(y)\rangle<0.
\]
Consequently (see Proposition \ref{pro_altern}) on $\mathcal A$ the map $w_{u^+(p),u^-(p),\nu_u(p)} \circ \delta_{1/r}F_p^{-1}$ takes the value $u^+(p)$ and the map $w_{u^+(q),u^-(q),\nu_u(q)} \circ \delta_{1/r}F_q^{-1}$ takes the value $u^-(q)$. By \eqref{eq_44_beta_cont},
\[
u^+(p)-u^-(q)=u^+(p)-u^-(p)+u^-(p)-u^-(q)\geq\frac1\eta-|u^-(p)-u^-(q)|\geq\frac{1}{2\eta}.
\]
Therefore it follows from \eqref{eq_44_measure}, the change of variables formula and
\eqref{eq_44_uniform_error} that
\begin{align*}
\frac{c_\ell}{2\eta}\rho^Q
&\leq |u^+(p)-u^-(q)|\leb^n(\mathcal A)\leq\int_{\mathcal A}|u(y)-u^+(p)|d\leb^n(y)
     +\int_{\mathcal A}|u(y)-u^-(q)|d\leb^n(y)\\
&\leq r^Q\big(\mathcal R_{\ell,k}(p)+\mathcal R_{\ell,k}(q)\big)<2^{Q+1}\rho^Q\frac{c_\ell}{2^{Q+3}\eta}
=\frac{c_\ell}{4\eta}\rho^Q,
\end{align*}
which is a contradiction. If instead
\[
\langle\nu_u(p),\pi_p(q)\rangle<-\theta_\ell\rho,
\]
Lemma \ref{lem_Aset} gives the opposite signs and the same argument gives an analogous contradiction. We have therefore proved that, for all sufficiently close $p,q\in E$,
\begin{equation}\label{eq_44_theta}
|\langle\nu_u(p),\pi_p(q)\rangle|\leq\theta_\ell d(p,q).
\end{equation}
Since $\theta_\ell=2^{-\ell}\to0$ as $\ell \to +\infty$, condition \eqref{eq_44_whitney} follows.
\end{proof}

By using the Whitney extension theorem, Theorem \ref{teo_whitney}, we can finally prove the countable $X$-rectifiability of the intrinsic jump set $\J_u$.

\begin{teo}\label{teo_rect_jump}
Let $\Omega\se(\R^n,X)$ be an open set and $u\in L^1_\loc(\Omega)$. Then the intrinsic jump set $\J_u$ is countably $X$-rectifiable.
\end{teo}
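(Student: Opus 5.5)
The plan is to assemble the pieces already established.

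\textbf{Reduction to compact pieces.} Since $\J_u=\bigcup_{\eta\in\N}\J_{u,\eta}$, it is enough to cover each $\J_{u,\eta}$ by countably many $C^1_X$-hypersurfaces up to an $\hau^{Q-1}$-null set. By Lemma \ref{lem_decomp}, $\J_{u,\eta}\se\bigcup_s\Gamma_{u,\eta,s}$, with each $\Gamma=\Gamma_{u,\eta,s}$ contained in a compact $K_s\subset U_s\cap\Omega$ for a good set $U_s$. For each such $\Gamma$, Lemma \ref{lem_condwhit} gives compact sets $E_h\se\Gamma\cap\J_{u,\eta}$ exhausting $\Gamma\cap\J_{u,\eta}$ up to $\hau^{Q-1}$-null sets. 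On each $E_h$, the normal $\nu_u$ (with the orientation $u^+>u^-$) is continuous and the uniform condition \eqref{eq_44_whitney} holds.

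\textbf{Whitney extension.} Fix $E=E_h\subset U$ and apply Theorem \ref{teo_whitney} with $f\equiv0$ and $a=\nu_u$. The hypothesis \eqref{eq_ipwhit} then reads $|\langle\nu_u(p),\pi_p(q)\rangle|/d(p,q)\to0$ uniformly, which is exactly \eqref{eq_44_whitney}. Both $f$ and $a$ are continuous on $E$. We therefore get an open $V\supset E$ and $F\in C^1_X(V)$ with $F=0$ on $E$ and $XF=\nu_u$ on $E$, so $|XF|=1$ on $E$.

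\textbf{Building the hypersurface.} By continuity of $XF$ and compactness of $E$, the set $V'\ceq\{x\in V:|XF(x)|>1/2\}$ is an open neighbourhood of $E$. Define $S_h\ceq\{x\in V':F(x)=0\}$. This is a $C^1_X$-hypersurface in the sense of Definition \ref{def_ipersup}: for each $x\in S_h$, pick a ball $B(x,r)\subset V'$; then $S_h\cap B(x,r)$ is the zero set of $F|_{B(x,r)}$, and $XF\neq0$ on that ball. Moreover $E_h\se S_h$.

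\textbf{Counting and conclusion.} Collecting $S_h$ over all $\eta,s,h$ gives a countable family. The exceptional set is contained in a countable union of the $\hau^{Q-1}$-null sets from \eqref{eq_hau0}, so it is itself $\hau^{Q-1}$-null.

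The main obstacle is not in this assembly but in the results it invokes. Two points deserve care here. First, \eqref{eq_44_whitney} must be uniform, since Lemma \ref{lem_condwhit} provides uniformity only after the Lusin/Egorov selection. Second, the orientation of $\nu_u$ must be chosen consistently on $\J_{u,\eta}$, which the convention $u^+>u^-$ guarantees. Both are built into the statement of Lemma \ref{lem_condwhit}, so the proof reduces to the bookkeeping above.
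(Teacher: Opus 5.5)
Your proposal is correct and follows essentially the same route as the paper. You split $\J_u=\bigcup_\eta\J_{u,\eta}$, refine through Lemma \ref{lem_decomp} and Lemma \ref{lem_condwhit} into compact pieces satisfying the uniform condition \eqref{eq_44_whitney}, apply Theorem \ref{teo_whitney} with $f\equiv0$ and $a=\nu_u$, and take $S=\{|XF|>1/2\}\cap\{F=0\}$; your explicit ball-by-ball check of Definition \ref{def_ipersup} only spells out a detail the paper leaves implicit.
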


\begin{proof}
For every $\eta\in\N$, consider the decomposition of $\J_{u,\eta}$ given by Lemma \ref{lem_decomp}. Applying Lemma \ref{lem_condwhit} to every piece and relabelling the resulting compact sets with their associated good sets, we obtain a countable family of compact sets $(E_{\eta,h})_{h\in\N}$ and good sets $(U_{\eta,h})_{h\in\N}$ such that
\[
E_{\eta, h} \subset U_{\eta,h} \cap \Omega ,\qquad \hau^{Q-1}\left(\J_{u,\eta}\setminus\bigcup_{h\in\N}E_{\eta,h}\right)=0,
\]
the map $\nu_u$ is continuous on every $E_{\eta,h}$, and
\[
\frac{|\langle\nu_u(p),\pi_p(q)\rangle|}{d(p,q)}\longrightarrow0
\]
uniformly for $p,q\in E_{\eta,h}$ as $d(p,q)\to0$. Fix $\eta,h\in\N$ and apply Theorem \ref{teo_whitney} on $E_{\eta,h}$ with
\[
f\equiv0,\qquad a(p)\ceq\nu_u(p).
\]
We obtain an open set $V_{\eta,h}$ with $E_{\eta,h} \subset V_{\eta,h} \subset U_{\eta,h}$ and a function $F_{\eta,h}\in C^1_X(V_{\eta,h})$ such that
\[
F_{\eta,h}=0,\qquad XF_{\eta,h}=\nu_u
\qquad\text{on }E_{\eta,h}.
\]
Since $|XF_{\eta,h}|=1$ on $E_{\eta,h}$, the open set
\[
V'_{\eta,h}\ceq\{x\in V_{\eta,h}:|XF_{\eta,h}(x)|>1/2\}
\]
contains $E_{\eta,h}$, and
\[
S_{\eta,h}\ceq V'_{\eta,h}\cap\{F_{\eta,h}=0\}
\]
is a $C^1_X$-hypersurface containing $E_{\eta,h}$. Finally,
\[
\J_u=\bigcup_{\eta\in\N}\J_{u,\eta},
\]
and therefore
\[
\hau^{Q-1}\left(\J_u\setminus\bigcup_{\eta,h\in\N}S_{\eta,h}\right)=0,
\]
proving that $\J_u$ is countably $X$-rectifiable.
\end{proof}

\bibliographystyle{acm}
\bibliography{rect}

\end{document}